\lstdefinestyle{C}{
    language=C,
    basicstyle=\small\ttfamily,
    keywordstyle=\small\ttfamily,
    morekeywords={omp,simd,reduction,simdlen,declare,inline,bool,restrict,half},
    otherkeywords={\#pragma,\_\_fp16},
    frame = single,
    captionpos=b,
    abovecaptionskip=1em,
}
\newcommand{\indicatorfn}{\mathds{1}}
\newtheorem{theorem}{Theorem}[section]
\newtheorem{lemma}[theorem]{Lemma}
\newtheorem{assumption}{Assumption}[section]
\newtheorem{model}{Model}[section]
\title{\Huge Rounding error using low precision approximate random variables}
\author{
\href{mailto:mike.giles@maths.ox.ac.uk}{Michael Giles}%
\thanks{\href{mailto:mike.giles@maths.ox.ac.uk}%
{\texttt{mike.giles@maths.ox.ac.uk}}} 
\and 
\href{mailto:oliver.sheridan-methven@hotmail.co.uk}{Oliver Sheridan-Methven}%
\thanks{\href{mailto:oliver.sheridan-methven@hotmail.co.uk}%
{\texttt{oliver.sheridan-methven@hotmail.co.uk}}}
}
\date{
Mathematical Institute, Oxford University, UK\\[1em]
\datedayname\ \nth{\number\day} \monthname\  \number\year}
\begin{document}

\maketitle

\begin{abstract}
For numerical approximations to stochastic differential equations using the Euler-Maruyama scheme, we propose incorporating approximate random variables computed using low precisions, such as single and half precision. We propose and justify a model for the rounding error incurred, and produce an average case error bound for two and four way differences, appropriate for regular and nested multilevel Monte Carlo estimations. By considering the variance structure of  multilevel Monte Carlo correction terms in various precisions with and without a Kahan compensated summation, we compute the potential speed ups offered from the various precisions. We find single precision offers the potential for approximate speed improvements by a factor of 7 across a wide span of discretisation levels. Half precision offers comparable improvements for several levels of coarse simulations, and even offers improvements by a factor of 10--12 for the very coarsest few levels. 

\begin{description}
\item[Keywords:] approximations, random variables, inverse cumulative distribution functions, random number generation, finite precision, half precision, floating point, rounding error, multilevel Monte Carlo, the Euler-Maruyama scheme, the Milstein scheme, Kahan compensated summation, and high performance computing.
\item[MSC subject classification:] 	65G50, 65C10, 41A10, 65C05, 65Y20, 60H35, 65B10, 65L70, 34M30, 97N20, and 65C30.
\end{description}
\end{abstract}

\section{Introduction}
\label{sec:introduction}

Rounding error has long been a source of scientific interest (and frustration). For a large fraction of the scientific community, the effects of rounding error are negligible and of little or no consequence. However, for an appreciable portion of the community, especially those pushing computer hardwares and numerical algorithms to the fastest speeds achievable, rounding error can present a considerable hurdle to the achievable fidelity and speed.

The example \textit{par excellence} of rounding error in scientific computing is in summation operations, frequent in linear algebra applications and to a lesser extent some statistical applications. Calculating the scalar product between two vectors, and thus also calculating vector and matrix multiplications, involves (among other things) summing a list of numbers. Being able to accurate sum a list of numbers has long been under the attention of mathematicians and computer scientists \citep{knuth2014art,kahan1965further,higham1993accuracy,trefethen1997numerical}, and its importance in scientific computing cannot be understated. When the numbers being summed are ill conditioned, the impact of rounding error grows with the problem's size, and can quickly nullify even the simplest of calculations, and thus high accuracy summation algorithms are frequently required. Outside of linear algebra, the accuracy of gradient and sensitivity estimates from finite difference methods and numerical differentiation is capped by the maximum available precision due to rounding error. Lastly, for the numerical solution of differential equations by simulation methods (deterministic or stochastic), iterative methods such as the Euler scheme incur rounding errors which get worse as the simulation's discretisation becomes finer. 

To combat the effects of rounding error, there are two particularly common approaches. The first is to try and bypass the issue by simply working in a higher precision, typically at the cost of computational speed. Historically this has motivated the introduction of double precision, extended double precision, and even quadruple precision data types. Similarly, several software libraries offer arbitrary levels of precision, such as: the mpmath \citep{mpmath} Python library, the GNU
multiple precision (GMP) arithmetic C/C++ library \citep{granlund2012gmp}, and the GNU multiple precision binary floating point with correct rounding (MPFR) C library \citep{fousse2007MPFR}. In a similar vein, hardware providers have also focussed efforts on improved floating point accuracy and reproducibility  \citep{burgess2018high}.

The second approach to reduce the influence of rounding error is to try and compensate and correct against it. For summations, the best known approach is the Kahan compensated summation \citep{kahan1965further}, although other compensation procedures have also been introduced and well explored \citep{moller1965quasi,knuth2014art,dekker1971floating,neumaier1974rounding,babuska1968numerical,klein2006generalised,linz1970floating,ogita2005accurate}. These proceed by inferring an estimate for the rounding error introduced at each stage of the summation, and then discount for this in the subsequent summations, thus compensating for the rounding error. 

There is a third means of circumventing rounding error, which is more mathematical in its nature, which looks to extrapolate accurate answers from less accurate approximations. The best example of this is Richardson extrapolation \citep{richardson1927viii,marchuk2012difference}. Without this technique, approximation schemes would need to go to such fine discretisations that rounding error would be significant, whereas using Richardson extrapolation is one possible technique at avoiding encountering rounding error. However, the use of Richardson extrapolation is very problem specific, and whilst it is a very powerful mathematical technique, it does not readily present itself as a general purpose computational tool for avoiding rounding error in most circumstances.  

Both high precision libraries and rounding error compensation schemes have historically been reserved for specialised applications seeking extraordinarily high accuracy, and closer to the edges of most scientific computing applications. However, in more recent years there has been an increased demand for ever lower precisions and data types, such as the IEEE half precision float \citep{ieee2008ieee}, and the more recent ``brain float'' \citep{burgess2019bfloat16,kalamkar2019study}. The large driving force behind these is the increasingly popular demand in machine learning applications, where the underlying data is very noisy and imprecise, and smaller data types are preferable for faster accessing, storage, and computation on the latest CPU, GPU, and TPU hardwares. Furthermore, with the greater desire for increased parallelisation on vector hardware and reduced precision calculations, lower precision data types are gaining considerable momentum and traction. 

In order to understand the nature of the nett rounding error arising in calculations, there have been two fronts of development. The first has been defining the precise rounding modes and data types used in scientific calculations. To ensure floating point calculations were standardised, the famous IEEE 754 standard for floating point arithmetic was introduced \citep{ieee1985ieee}, and is now the industry standard. This entails addition, subtraction, multiplication, division, and square roots all producing exact results with respect to the appropriate rounding mode \citep[page~15]{tucker2011validated}. Similarly, the rounding modes a computer uses to round floating point values are standardised, with ``round to nearest even'' typically being the default mode. Of course, while floating point arithmetic may be well defined, there are several difficulties and nuances, as discussed by \citet{goldberg1991every}.

The second front has been with the mathematical modelling of rounding errors. While the IEEE 754 standard specifies the hardware's behaviour, this does not readily give insight into the behaviour of the emergent nett rounding error. Describing the nett effect that results during calculations has received much mathematical attention \citep{higham2002accuracy,wilkinson1961error,wilkinson1974numerical,wilkinson1986error,hull1966tests}, and one of the best overviews is by \citet{higham2002accuracy}, who analyses the standard model for deterministic rounding error \citep[2.2, (2.4)]{higham2002accuracy}. Furthermore, in recent years there has been a piqued interest in stochastic rounding modes and associated error models, with prominent recent work by \citet{higham2019new} and \citet{ipsen2019probabilistic} performing probabilistic error analyses, which frequently give tighter and more realistic error bounds than the worst case deterministic scenarios.  

With this wealth of attention from academia and industry, the work we present produces a model for the rounding error incurred during the numerical simulation of stochastic differential equations. Typical treatments of numerical methods for such stochastic differential equations assume no rounding error occurs or is otherwise negligible \citep[9.3, page~316]{kloeden1999numerical} \citep{glasserman2013monte}. The earliest work to compensate for rounding error in simulations for ordinary differential equations appears to be by \citet{vitasek1969numerical}. In the setting of stochastic differential equations, the most relevant works are by \citet{arciniega2003rounding} and \citet{omland2016mixed}. \citet{arciniega2003rounding} present an \textit{ad hoc} statistically motivated model for the rounding error which occurs in the Euler-Maruyama scheme, giving an average case bound for the overall rounding error. \citet{omland2016mixed} takes a more rigorous approach, closer to a first principles model, starting with the floating point rounding modes and standard error model by \citet{higham2002accuracy}, and produces a worst case bound for the error in the Euler-Maruyama scheme \citep[theorem~4.8]{omland2016mixed}. 

The contribution of this work will be to present a heuristic model for the rounding error in a similar manner to \citet{arciniega2003rounding}. However, our model will be much more rigorously justified by a detailed inspection of the dominant rounding errors anticipated in the Euler-Maruyama scheme. Furthermore, we will show that there are two primary sources of error which contribute to the nett error. The first is a zero mean process, similar to that described by \citet{arciniega2003rounding}. The second is a possibly non zero mean systematic error term omitted by \citet{arciniega2003rounding}. This second process is of a much smaller size than the first, but due to its non zero mean nature, its nett contribution will grow at the same rate at that arising from the zero mean process. The significance of this new model is that it quantifies the permissible systematic rounding errors in the Euler-Maruyama scheme. Furthermore, our model is amenable to incorporation within the nested multilevel Monte Carlo scheme utilising approximate random variables developed by \citeauthor{giles2020approximate} \citep{giles2020approximate,sheridan2020approximate_inverse,sheridan2020approximate_random,sheridan2020nested}. Although work has been done by \citet{brugger2014mixed} and \citet{omland2015exploiting} on constructing multilevel Monte Carlo schemes in the presence of rounding error, our model directly facilitates a treatment jointly allowing for approximate random variables and low precision calculations, correctly handled by a nested multilevel Monte Carlo scheme. A secondary contribution of this work will also be to demonstrate the applicability of a Kahan compensated summation within the Euler-Maruyama scheme, an extension of the similar idea by \citet{vitasek1969numerical} in the setting of ordinary differential equations. 

Section~\ref{sec:numerical_solutions_to_stochastic_differential_equations} overviews the numerical solution of stochastic differential equations, providing the primary context and setting of our work, presenting our model for the leading order error process arising in the Euler-Maruyama scheme. Section~\ref{sec:multilevel_monte_carlo} will showcase how our model can be incorporated into a multilevel Monte Carlo framework, demonstrating practical applications of the model and highlighting the savings that can be expected using low precisions. Section~\ref{sec:conclusions} presents the conclusions from this work. 

\section{Numerical solutions to stochastic differential equations}
\label{sec:numerical_solutions_to_stochastic_differential_equations}

There are various settings appropriate for analysing the effects of rounding error, and the numerical solutions of stochastic differential equations is one particularly important setting. Frequently the terminal solution $ X_T $ of the stochastic differential equation $ \dd{X_t} = a(t, X_t) \dd{t} + b(t, X_t)\dd{W_t} $ needs to be approximated for given drift and diffusion processes $ a $ and $ b $. To achieve this, whole path approximations $ \widehat{X}_t \approx X_t $ for $ t \in [0, T] $ are produced, where the most popular methods are the Euler-Maruyama and Milstein schemes. For a thorough  detailing see \citet{kloeden1999numerical} and \citet{glasserman2013monte}. The approximations simulate the process over $ N $ time steps of size $ \Delta t \equiv \delta \coloneqq \tfrac{T}{N} $, where the update at the $ n $-th iteration at time $ t_n \coloneqq n\delta $ requires a Wiener process increment $ \Delta W $. The usual numerical schemes use a standard Gaussian random variable $ Z_n $ to simulate from this process, where $ \Delta W_n \coloneqq \sqrt{\delta} Z_n $. 

To ensure the stochastic process has a unique strong solution and the Euler-Maruyama scheme converges, we assume the standard assumptions from \citet[4.5]{kloeden1999numerical}, which are that: $ a $ and $ b $ are jointly Lebesgue measurable, spatially Lipschitz continuous, have linear spatial growth, $ \tfrac{1}{2} $-H\"{o}lder temporal continuity with linear spatial growth, and that $ X $ has a measurable initial condition.

\subsection{Approximate random variables}
\label{sec:approximate_random_variables}

Unfortunately, sampling from the Gaussian distribution is expensive, and so there have been several approaches to bypass this cost. Most of these look to substitute the exact Gaussian increment $ Z_n $ with another random variable $ \widetilde{Z}_n $ with similar statistics. For clarity and consistency with \citeauthor{giles2020approximate} \citep{giles2020approximate,sheridan2020approximate_inverse,giles2020approximating}, we call these substitutes \emph{approximate random variables}, and the originals as \emph{exact random variables}. The most well known is to use Rademacher random variables, producing what's known as the weak Euler-Maruyama scheme \citep[page~XXXII]{kloeden1999numerical}, where the Rademacher random variables have the desired mean. More advanced methods include more generalised moment matching procedures, as discussed by \citet{muller1958inverse}, and piecewise polynomial approximations and generalised approximate random variables by \citeauthor{giles2020approximate} \citep{giles2020approximate,sheridan2020approximate_inverse}. The Euler-Maruyama schemes using the exact Gaussian random variables $ Z_n $ give rise to the approximation $ \widehat{X} $, and the approximate random variables $ \widetilde{Z}_n $ produce $ \widetilde{X} $, where the schemes are respectively 
\begin{equation*}
\widehat{X}_{n+1} = \widehat{X}_n + a(t_n, \widehat{X}_n) \delta + b(t_n, \widehat{X}_n)\sqrt{\delta} Z_n
\qquad \text{and} \qquad 
\widetilde{X}_{n+1} = \widetilde{X}_n + a(t_n, \widetilde{X}_n) \delta + b(t_n, \widetilde{X}_n)\sqrt{\delta} \widetilde{Z}_n.
\end{equation*}

One of the approximations we will utilise later is the piecewise linear approximation by \citet{giles2020approximating}. This generates approximate Gaussian random variables by the inverse transform method \citep{glasserman2013monte} using a piecewise linear approximation $ \widetilde{\Phi}^{-1} $ to the Gaussian distribution's inverse cumulative distribution function $ \Phi^{-1} $. The exact construction is detailed by \citet{giles2020approximating}, although an example approximation is demonstrated in figure~\ref{fig:piecewise_linear_approximation}. A piecewise linear approximation $ \widetilde{\Phi}^{-1} \approx \Phi^{-1} $ using 8 intervals is shown in figure~\ref{fig:piecewise_linear_gaussian_approximation}, and the resultant probability density function of the approximation $ \rho $ is shown in figure~\ref{fig:piecewise_linear_gaussian_approximation_pdf}. The probability density function has compact support, and there are a few tiny inaccessible regions with zero measure, as indicated. The rounding error model we will later propose will hold for both exact Gaussian random variables, and also certain classes of approximate Gaussian random variables, which we require that they satisfy assumption~\ref{asmp:approximate_random_variables}.

\begin{figure}[htb]
\centering
\hfil
\subfigure[A piecewise linear approximation using 8 intervals.\label{fig:piecewise_linear_gaussian_approximation}]{\includegraphics{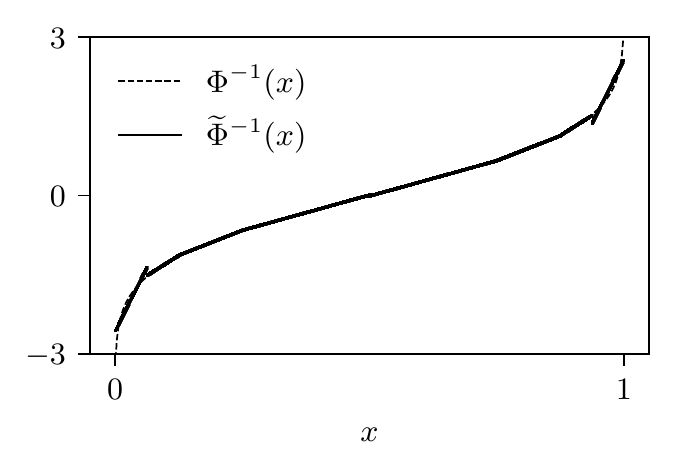}}\hfil
\subfigure[The resultant probability density function with regions of zero measure indicated.\label{fig:piecewise_linear_gaussian_approximation_pdf}]{\includegraphics{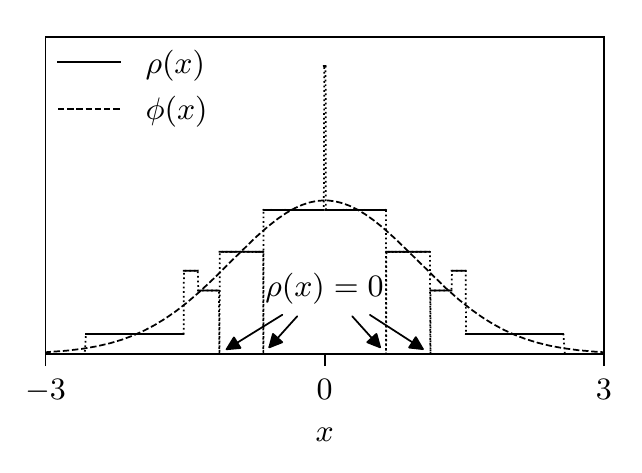}}\hfil

\caption{A piecewise linear approximation of the Gaussian distribution's inverse cumulative distribution function, and the resultant probability density function.}
\label{fig:piecewise_linear_approximation}
\end{figure}

\begin{assumption}
\label{asmp:approximate_random_variables}
Let any approximate Gaussian random variables $ \widetilde{Z} $ be zero mean, uniformly bounded, and have finite variance $ \mathbb{V}(\widetilde{Z}) = O(1) < \infty  $, and also have all higher order moments be finite. Furthermore, let there exist a corresponding probability density function $ \rho $ such that $ \mathbb{P}(\widetilde{Z} \in [z, z + \dd{z}]) = \rho(z) \dd{z}$.  Let $ \rho $ be bounded by $ K $ such that $ \rho \leq K < \infty $ and be smooth almost everywhere such that $ \rho \in C^\infty(\mathbb{R}\backslash\mathcal{M})$, where $ \mathcal{M} $ is a finite set of $ M $ points where $ \rho $ is discontinuous. Lastly, let $ \rho $ decay sufficiently fast such that for any finite constant $ \alpha $ that $ \sum_{k=-\infty}^\infty 2^{2k} \max_{y'\in[2^{k}, 2^{k+1}]} \rho(y'-\alpha) < \infty $.
\end{assumption}

\begin{lemma}
\label{lemma:exact_gaussian_distribution}
The exact Gaussian distribution satisfies assumption~\ref{asmp:approximate_random_variables}.
\end{lemma}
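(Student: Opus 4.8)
The plan is to verify the clauses of Assumption~\ref{asmp:approximate_random_variables} one at a time for the standard normal $ \widetilde{Z} \sim \mathcal{N}(0,1) $, whose density is $ \varphi(z) = \tfrac{1}{\sqrt{2\pi}} e^{-z^2/2} $; every clause except the final dyadic decay sum is immediate from classical facts. Concretely, $ \mathbb{E}(\widetilde{Z}) = 0 $ by symmetry, $ \mathbb{V}(\widetilde{Z}) = 1 = O(1) < \infty $, and all moments $ \mathbb{E}(|\widetilde{Z}|^m) $ are finite; the law has density $ \rho = \varphi $, which satisfies $ \rho(z) \le \tfrac{1}{\sqrt{2\pi}} =: K < \infty $; and $ \varphi \in C^\infty(\mathbb{R}) $, so the exceptional set may be taken to be $ \mathcal{M} = \varnothing $ with $ M = 0 $. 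The one clause requiring comment is ``uniformly bounded'': read literally as a property of $ \widetilde{Z} $ it fails, since the Gaussian has full support, but it holds for the density $ \rho $ (matching the later requirement $ \rho \le K $), and in the downstream analysis it is used only as a proxy for finiteness of all moments together with rapid tail decay — both of which hold here — so I would record this caveat and move on (equivalently, read the lemma for a Gaussian truncated at an arbitrary fixed level).

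The substantive step is to verify $ \sum_{k=-\infty}^{\infty} 2^{2k} \max_{y' \in [2^{k},\, 2^{k+1}]} \varphi(y' - \alpha) < \infty $ for an arbitrary finite constant $ \alpha $. I would fix $ k_1 $ to be the smallest integer with $ 2^{k_1} > 2|\alpha| $ and split the sum there. For $ k \le k_1 $, bound the maximum crudely by $ \varphi(0) = K $, so this part is at most $ K \sum_{k \le k_1} 2^{2k} = \tfrac{4}{3} K\, 2^{2 k_1} < \infty $. For $ k \ge k_1 + 1 $ the shifted interval $ [\,2^{k} - \alpha,\ 2^{k+1} - \alpha\,] $ lies in $ (0, \infty) $, where $ \varphi $ is strictly decreasing, so its maximum there is attained at the left endpoint and equals $ \tfrac{1}{\sqrt{2\pi}} \exp\!\big(-\tfrac{1}{2}(2^{k} - \alpha)^2\big) $; since $ 2^{k} - \alpha \ge 2^{k} - |\alpha| > 2^{k-1} > 0 $ for such $ k $, the general term is at most $ 2^{2k} \exp(-\tfrac{1}{8} 2^{2k}) $, which decays doubly exponentially in $ k $ and is therefore summable. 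Adding the two finite pieces gives the bound, completing the verification.

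I expect no genuine analytic obstacle here: the Gaussian's super-exponential tail makes the decay sum trivially convergent, and that tail is precisely what the assumption is abstracting when it asks for boundedness together with finite moments. The only point worth a little care is the shift by the constant $ \alpha $, which is why I split the sum at a threshold $ k_1 $ beyond which the shifted dyadic interval no longer contains the mode $ z = 0 $; below the threshold only finitely many terms with large $ 2^{2k} $ appear and the crude bound by $ K $ suffices, while above it the exponential decay dominates.
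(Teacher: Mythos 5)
Your proof is correct, and its overall plan matches the paper's: dispatch the easy clauses of assumption~\ref{asmp:approximate_random_variables} by inspection of $\phi$, then concentrate on the dyadic decay sum. Where you differ is in how that sum is handled. The paper locates the single index $k^*$ whose interval contains the mode of the shifted density, notes the summand is eventually decreasing, and then \emph{approximates} the tail $\sum_{k>k^*} 2^{2k}\rho(2^k-\alpha)$ by the integral $\int x^2\phi(x-\alpha)\dd{x}$, i.e.\ by the (finite) second moment of a shifted Gaussian, remarking that the sum can be sandwiched between such integrals. You instead split at an explicit threshold $k_1$ with $2^{k_1}>2\lvert\alpha\rvert$, bound the finitely-weighted lower part by $K\sum_{k\le k_1}2^{2k}=\tfrac43 K 2^{2k_1}$, and bound each upper term by $2^{2k}\exp(-\tfrac18 2^{2k})$ via monotonicity of $\phi$ on $(0,\infty)$. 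Your version is more elementary and fully rigorous term by term, avoiding the paper's ``$\approx$'' integral-comparison step (which it justifies only informally); the paper's version is shorter and makes the connection to the second moment explicit. One further point in your favour: you correctly flag that ``uniformly bounded,'' read as a property of the random variable $\widetilde{Z}$ itself, fails for the Gaussian (which has unbounded support), whereas the paper simply asserts boundedness with a citation; your reading — that what is actually needed and available is boundedness of the density together with finiteness of all moments and rapid tail decay — is the right resolution, since those are the only properties invoked downstream in lemma~\ref{lemma:leading_order_error}.
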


\begin{proof}
We immediately have that the Gaussian distribution is zero mean and has unit variance, and is uniformly bounded \citep[appendix~C.2]{blundell2014concepts}. The probability density function for the Gaussian distribution is $ \rho \equiv \phi $ where $ \phi(z) \coloneqq \tfrac{1}{\sqrt{2\pi}} {\exp}(-\tfrac{1}{2}z^2) $, which is $ C^\infty(\mathbb{R}) $ and maximal at zero where $ \rho \leq \phi(0) = \tfrac{1}{\sqrt{2\pi}} $. To show $ \sum_{k=-\infty}^\infty 2^{2k} \max_{y'\in[2^{k}, 2^{k+1}]} \rho(y'-\alpha) < \infty $ we note that the summand is increasing as $ k $ increases, and $ \rho $ will be maximal for one index $ k^* $ where $ y' = \alpha $, and for indices $ k > k^* $ the $ \rho $ term will thereafter be decreasing. Thus we can approximate the possibly divergent part of the summation by the integral 
\begin{equation*}
\sum_{k=k^* + 1}^\infty 2^{2k} \max_{y'\in[2^{k}, 2^{k+1}]} \rho(y'-\alpha) = \sum_{k=k^* + 1}^\infty 2^{2k}  \rho(2^k-\alpha)
\approx \int_{2^{k^* + 1}}^\infty  x^2  \phi(x-\alpha) \dd{x}
\leq \int_{-\infty}^\infty  x^2  \phi(x-\alpha) \dd{x} < \infty.
\end{equation*}
The summation can be bounded from above and below by similar integrals, and thus is not divergent. 
\qedhere
\end{proof}

\begin{lemma}
\label{lemma:approximate_gaussian_distribution_linear}
The approximate Gaussian distribution resulting from the piecewise linear approximation by \citet{giles2020approximating} satisfies assumption~\ref{asmp:approximate_random_variables}.
\end{lemma}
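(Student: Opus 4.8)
The plan is to unpack the construction of \citet{giles2020approximating} and verify each clause of assumption~\ref{asmp:approximate_random_variables} in turn; unlike lemma~\ref{lemma:exact_gaussian_distribution}, almost everything will follow mechanically from the piecewise linear, compactly supported nature of the approximation, so no integral estimate will be needed. Recall that the approximate random variable is generated by the inverse transform method \citep{glasserman2013monte} as $\widetilde{Z} = \widetilde{\Phi}^{-1}(U)$ with $U \sim \mathrm{Uniform}(0,1)$, where $\widetilde{\Phi}^{-1}$ is a monotonically increasing, piecewise linear map built from finitely many affine pieces on a partition $0 = u_0 < u_1 < \cdots < u_m = 1$ of the unit interval, with a finite strictly positive slope $s_i$ on the $i$-th piece.

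First I would establish boundedness and the moment conditions. Since $\widetilde{\Phi}^{-1}$ is affine with finite slope on the two extreme intervals $(u_0,u_1)$ and $(u_{m-1},u_m)$, it maps $(0,1)$ onto a bounded interval, say $[-b,b]$; hence $\widetilde{Z}$ is uniformly bounded, and consequently every moment is finite, in particular $\mathbb{V}(\widetilde{Z}) < \infty$, which is moreover $O(1)$ by boundedness (and close to unity by the construction's moment matching). Zero mean follows from the odd symmetry of the construction about $u = \tfrac{1}{2}$, i.e. $\widetilde{\Phi}^{-1}(1-u) = -\widetilde{\Phi}^{-1}(u)$, which makes $\widetilde{Z}$ and $-\widetilde{Z}$ equal in law.

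Next I would identify the density and dispatch the remaining clauses. Applying the change of variables formula to $\widetilde{Z} = \widetilde{\Phi}^{-1}(U)$ piece by piece, the pushforward of the uniform law has a density $\rho$ that is piecewise constant, equal to $1/s_i$ on the image of the $i$-th affine piece and equal to zero on the finitely many zero-measure gaps in the range created by the jumps of $\widetilde{\Phi}^{-1}$, exactly as indicated in figure~\ref{fig:piecewise_linear_gaussian_approximation_pdf}. With only finitely many pieces this gives $\rho \le K \coloneqq \max_i 1/s_i < \infty$; on each of the finitely many open sub-intervals $\rho$ is constant and hence in $C^\infty$, so the discontinuity set $\mathcal{M}$ is finite; and $\rho$ is supported in the bounded set $[-b,b]$. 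The tail-decay condition is then immediate: since $\rho$ has compact support, $\max_{y'\in[2^k,2^{k+1}]}\rho(y'-\alpha) = 0$ for all but finitely many $k \in \mathbb{Z}$, so $\sum_{k=-\infty}^\infty 2^{2k}\max_{y'\in[2^k,2^{k+1}]}\rho(y'-\alpha)$ has only finitely many nonzero terms and is finite. I do not expect a genuine obstacle here; the only point needing care is to check against the precise construction in \citet{giles2020approximating} that every slope $s_i$ is finite and strictly positive (so $\rho$ is bounded and $\widetilde{\Phi}^{-1}$ truly has bounded range) and to count the discontinuities of $\rho$ correctly, but these are bookkeeping matters rather than real difficulties, and compact support does all the work that an explicit Gaussian tail integral had to do in lemma~\ref{lemma:exact_gaussian_distribution}.
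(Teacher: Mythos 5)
Your proposal is correct and follows essentially the same route as the paper's proof, which likewise argues from symmetry, uniform boundedness, compact support (giving finite variance and trivialising the summation bound since only finitely many terms are nonzero), and the finiteness of the discontinuity set. The only difference is that you derive the piecewise constant density explicitly from the inverse transform construction, whereas the paper reads these properties directly off figure~\ref{fig:piecewise_linear_gaussian_approximation_pdf}; your extra detail is harmless but not a different argument.
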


\begin{proof}
For a finite number of approximation intervals, we can see from figure~\ref{fig:piecewise_linear_gaussian_approximation_pdf} that the probability density function is symmetric, uniformly bounded, has compact support and thus finite variance. The number of discontinuities is finite, and as $ \rho $ has compact support it immediately satisfies the summation bound from assumption~\ref{asmp:approximate_random_variables}. \qedhere
\end{proof}

\begin{lemma}
\label{lemma:approximate_gaussian_distribution_cubic}
The approximate Gaussian distribution resulting from the piecewise cubic approximation by \citet{giles2020approximating} satisfies assumption~\ref{asmp:approximate_random_variables}.
\end{lemma}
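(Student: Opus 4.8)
The plan is to reuse the template of the proof of Lemma~\ref{lemma:approximate_gaussian_distribution_linear}, exploiting that a finite union of cubic pieces over bounded intervals is bounded, and to spend the extra effort only on the regularity of the density, which is now genuinely nontrivial: a piecewise cubic $\widetilde{\Phi}^{-1}$ has a piecewise \emph{quadratic} derivative, so the resulting $\rho$ is no longer piecewise constant but a piecewise rational function. First I would recall from \citet{giles2020approximating} the structural properties of the construction, namely that $(0,1)$ is partitioned into a finite number of subintervals, on each of which $\widetilde{\Phi}^{-1}$ is a cubic polynomial, and that the resulting map is continuous, strictly increasing, and symmetric in the sense $\widetilde{\Phi}^{-1}(1-u) = -\widetilde{\Phi}^{-1}(u)$. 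Since each cubic piece is evaluated on a bounded interval, $\widetilde{\Phi}^{-1}$ is bounded, so the approximate random variable $\widetilde{Z} = \widetilde{\Phi}^{-1}(U)$ with $U\sim\mathrm{Uniform}(0,1)$ has compact support. Compact support immediately yields finite variance $\mathbb{V}(\widetilde{Z}) = O(1)$ (the fit is designed to approximate the unit Gaussian), finiteness of all higher moments, and it makes the decay condition $\sum_k 2^{2k}\max_{y'\in[2^k,2^{k+1}]}\rho(y'-\alpha) < \infty$ trivial, since only finitely many summands are nonzero (should a tail piece instead retain the exact $\Phi^{-1}$, this last condition follows from Lemma~\ref{lemma:exact_gaussian_distribution}). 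Symmetry of $\widetilde{\Phi}^{-1}$ makes $\rho$ symmetric and hence $\widetilde{Z}$ zero mean.

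Next I would treat the density itself. Because $\widetilde{\Phi}^{-1}$ is strictly increasing and piecewise cubic, the inverse transform method together with the change-of-variables formula gives, for $z$ in the support, $\rho(z) = 1/\bigl(\widetilde{\Phi}^{-1}\bigr)'\!\bigl(\widetilde{\Phi}(z)\bigr)$, where $\widetilde{\Phi}$ denotes the inverse of $\widetilde{\Phi}^{-1}$. On each piece $\bigl(\widetilde{\Phi}^{-1}\bigr)'$ is a quadratic; provided it is bounded below by a positive constant, $\rho$ is on each piece the reciprocal of a strictly positive quadratic composed with the smooth map $\widetilde{\Phi}$, hence lies in $C^\infty$, and is bounded above by the reciprocal of that positive lower bound, so $\rho \le K < \infty$. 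The only candidate discontinuities of $\rho$ are at the finitely many interior knots of the partition, so $\rho \in C^\infty(\mathbb{R}\backslash\mathcal{M})$ with $\mathcal{M}$ a finite set, analogously to figure~\ref{fig:piecewise_linear_gaussian_approximation_pdf}. Collecting these facts verifies each clause of assumption~\ref{asmp:approximate_random_variables}.

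The main obstacle is the clause hidden in the previous paragraph: establishing that $\bigl(\widetilde{\Phi}^{-1}\bigr)'$ is bounded away from zero on all of $(0,1)$, i.e.\ that the cubic pieces are not merely monotone but uniformly so. If some piece had a horizontal tangent, $\rho$ would acquire there an inverse-square-root singularity — integrable, but unbounded — violating the requirement $\rho \le K$. I expect this to follow from the specifics of the cubic fit in \citet{giles2020approximating}: e.g.\ that each cubic interpolates $\Phi^{-1}$ together with its strictly positive derivative $1/\phi(\Phi^{-1}(\cdot))$ at the knots and is monotone between them, or that it is a constrained least-squares fit with derivative controlled on the closure of each subinterval. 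Either way, since there are only finitely many pieces and each sits on a compact interval, a single uniform positive lower bound on $\bigl(\widetilde{\Phi}^{-1}\bigr)'$ is then available, after which the remaining steps are routine.
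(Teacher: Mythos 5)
Your proposal is correct, but it is considerably more careful than what the paper actually does: the paper's entire proof of this lemma is the single sentence that it ``follows identically to the proof of lemma~\ref{lemma:approximate_gaussian_distribution_linear}'', which in turn just reads off symmetry, boundedness, compact support, and finitely many discontinuities from the figure. You have correctly identified that the cubic case is \emph{not} literally identical: for a piecewise linear $\widetilde{\Phi}^{-1}$ the density is piecewise constant and boundedness is immediate from monotonicity, whereas for a piecewise cubic $\widetilde{\Phi}^{-1}$ one has $\rho(z) = 1/\bigl(\widetilde{\Phi}^{-1}\bigr)'\bigl(\widetilde{\Phi}(z)\bigr)$ with a quadratic derivative, so the bound $\rho \le K$ genuinely requires $\bigl(\widetilde{\Phi}^{-1}\bigr)'$ to be bounded away from zero --- a horizontal tangent anywhere would produce an unbounded (though integrable) singularity in $\rho$ and break assumption~\ref{asmp:approximate_random_variables}. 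The paper silently assumes this away; you flag it and defer it to the monotonicity properties of the construction in \citet{giles2020approximating}, which is a reasonable place to leave it given that the lemma concerns that specific construction, but it is the one point where your argument (like the paper's) rests on an unverified property of the cited fit rather than on anything proved here. Everything else in your proposal --- compact support giving finite moments and trivialising the tail-sum condition, symmetry giving zero mean, finitely many knots giving a finite exceptional set $\mathcal{M}$ --- matches the paper's (terser) reasoning exactly.
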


\begin{proof}
The proof follows identically to the proof of lemma~\ref{lemma:approximate_gaussian_distribution_linear}. \qedhere
\end{proof}

The motivation for introducing these approximate random variables was increased simulation speed. However, for the ultimate speed improvements, it is desirable to both switch to approximate random variables, and simultaneously decrease the arithmetic precision used, giving a twofold speed improvement. Reducing the precision alone has been explored with applications to field programmable gate arrays  \citep{brugger2014mixed,omland2015exploiting,omland2016mixed,chow2012mixed}, as have multilevel Monte Carlo schemes using varying fidelities of approximate random variables \citep{muller1958inverse}. However, performing both simultaneously is touched upon by \citet{giles2019random_multilevel}, although using a quite restrictive truncated uniform random bit Monte Carlo algorithm, and extensions to more general approximation schemes without varying the precision is done by \citet{giles2020approximate}. However, the work by \citet{giles2019random_multilevel} is primarily a cost most, and does not model the effect of rounding error. Thus, while our contribution is an extension of these works, it is an important and novel demonstration and vindication of the utility of low precisions with approximate random variables.

In order to describe the effect of rounding error resulting from the Euler-Maruyama scheme, the two most prominent works are by \citet{arciniega2003rounding} and \citet{omland2016mixed}, which are models for the average and worst case errors respectively. \citet{arciniega2003rounding} provide an \textit{ad hoc} statistical model and analysis for the rounding error arising from finite precision floating point calculations within the Euler-Maruyama scheme. Denoting the estimate produced when working in finite precision as $ \overline{X} $, they propose that at the $ n $-th iteration, all of the composite floating point arithmetic in the Euler-Maruyama update culminates in an additive error $ \varepsilon_n $ where
$ \overline{X}_{n+1} = \overline{X}_n + a(t_n, \overline{X}_n) \delta + b(t_n, \overline{X}_n)\sqrt{\delta} Z_n + \varepsilon_n $.
This error is assumed to follow a Gaussian distribution, be zero mean, and have a variance $ \mathbb{V}(\varepsilon) \leq C \varrho^2 $, where $ \varrho $ is the unit roundoff and $ C $ is some arbitrary constant. The main result from their analysis \citep[theorem~2.2]{arciniega2003rounding} is $ \mathbb{E}(\lvert \widehat{X}_N - \overline{X}_N \rvert^2) \leq  CN\varrho^2 $. The model from \citet{omland2016mixed} uses a more rigorous finite precision framework. For brevity, letting $ \oplus $ and $ \otimes $ represent floating point addition and multiplication, then the model by \citet{omland2016mixed} in effect considers $ \overline{X}_{n+1} = \overline{X}_n \oplus ((a(t_n, \overline{X}_n) \otimes \delta) \oplus ((b(t_n, \overline{X}_n)\otimes \sqrt{\delta})\otimes Z_n)) $ and produces the worst case bound $ \mathbb{E}(\lvert \widehat{X}_N - \overline{X}_N \rvert^2) \leq  CN^2\varrho^2 $ \citep[theorem~4.8]{omland2016mixed}. The model we will present will take the model from \citet{omland2016mixed} as its starting point, but under assumptions appropriate for the Euler-Maruyama scheme, will ultimately reduce to a model closer resembling that by \citet{arciniega2003rounding}.

\subsection{A leading order error model}
\label{sec:a_leading_order_error_model}

Starting with the more fundamentally rooted model from \citet{omland2016mixed}, we expect to recover, under appropriate assumptions, the more statistically motivated model from \citet{arciniega2003rounding}. To this end we look to expand the model from \citet{omland2016mixed} and see the effects of arithmetic roundoff within the Euler-Maruyama update. Before presenting the analysis, we will briefly introduce a small amount of floating point notation. 

Numbers used in calculations must be stored in finite precision, where we denote the set of representable numbers as $ \overline{\mathbb{R}} \subset \mathbb{R}$, where we introduce the rounding operator $ R \colon \mathbb{R} \to \overline{\mathbb{R}}$ which implements the desired rounding mode, which we assume is round to nearest even. For finite precision binary arithmetic operations $ \circledast $ (such as $\oplus$, $\otimes$, etc.), then for two floating point numbers $ x,y \in \overline{\mathbb{R}} $, we assume the standard rounding model by \citet[2.2, (2.4)]{higham2002accuracy} \citep[page~99, (13.7)]{trefethen1997numerical} that $ x \circledast y = R(x\ast y) = (x \ast y ) (1 + \varepsilon)$ where $ \lvert \varepsilon\rvert \leq \varrho $. 

We begin by expressing the finite precision Euler-Maruyama update as  
$ \overline{X}_{n+1} = \overline{X}_n \oplus (A_n \oplus B_n ) $, where $ A_n \coloneqq 
a(t_n, \overline{X}_n) \otimes \delta $ and $ B_n \coloneqq  (b(t_n, \overline{X}_n) \otimes \sqrt{\delta}) \otimes \widetilde{Z}_n $. For an appropriately non dimensionalised stochastic process, such that $ T = 1 $, $ X_0 = O(1) $, $ a = O(1) $, $ b = O(1) $, then we anticipate $ \overline{X}_n = O(1) $, $ \widetilde{Z}_n = O(1) $, $ A_n = O(\delta) $, and $ B_n = O(\sqrt{\delta}) $. Similarly, for the precision levels and discretisations we have $ \varrho \ll 1 $ and $ \delta \ll \sqrt{\delta} \ll 1 $. This then gives us the size ordering $ \mathbb{E}(\lvert A_n \rvert) \ll \mathbb{E}(\lvert B_n \rvert) \ll \mathbb{E}(\lvert \overline{X}_n \rvert)$.

The first addition will produce an absolute error $ \eta'_n $ from $ A_n \oplus B_n = A_n + B_n + \eta'_n  $, where $ \eta'_n $ will be of  a size comparable with the unit roundoff and the larger of $ A_n $ and $ B_n $, which is $ B_n $. As $ b $ is assumed to have linear growth we obtain $ \lvert \eta'_n\rvert  \sim \lvert B_n \varrho\rvert  = O(\sqrt{\delta} \varrho (1 + \lvert\overline{X}_n\rvert))$, and after performing this first floating point addition we will be left with
$ \overline{X}_{n+1} = \overline{X}_n \oplus (B_n + A_n + \eta'_n ) $,
where we have written $ B_n + A_n + \eta'_n $ in order of decreasing magnitudes. 

For the remaining addition operation, as we $ X_n = O(1) $ and $ \mathbb{E}(\lvert \overline{X}_n \rvert)  \gg \mathbb{E}(\lvert B_n \rvert)  $, the nett result from the remaining floating point addition then is that this will produce a second absolute arithmetic error $ \eta_n $ where $ \lvert \eta_n \rvert  \sim \lvert  \overline{X}_n \varrho \rvert = O(\varrho (1 + \lvert \overline{X}_n\rvert ))$, and the Euler-Maruyama update will become
$ \overline{X}_{n+1} \approx \overline{X}_n + B_n + A_n + \eta_n + \eta'_n $, 
where again we have written the contributions in order of decreasing magnitudes. We identify two dominant sources of error. The first is $ \eta'_n $, arising from the addition of the drift term to the diffusion term. The second is $ \eta_n $, arising from the addition of this sum to the underlying process.  We expect $ \lvert \eta'_n\rvert  = O(\varrho\sqrt{\delta} (1 + \lvert \overline{X}_n\rvert ))$ and $ \lvert \eta_n\rvert = O(\varrho (1 + \lvert \overline{X}_n\rvert ))$, and thus $ \mathbb{E}(\lvert \eta_n\rvert)  \gg \mathbb{E}(\lvert \eta'_n\rvert ) $.  If we then allow for the inclusion of other higher order contributions, we can then see that we expect to obtain
$ \overline{X}_{n+1} = \overline{X}_n + B_n + A_n + \eta_n + \eta'_n + \eta''_n $.

The \citet{arciniega2003rounding} model assumes that only $ \eta_n $ is significant, and makes the assertion that this can be modelled as a zero mean Gaussian random variable with a variance only proportional $ \varrho^2 $. In our model, we will more rigorously justify the zero mean nature, drop the requirement that this exactly follows a Gaussian distribution, and show that the smaller second order contributions from $ \eta' $ are not necessarily negligible, but contribute to the nett rounding error at the same rate as the leading order $ \eta $ process. We propose model~\ref{model:rounding_errors} as an appropriate model for the rounding errors arising in the Euler-Maruyama scheme.  

\begin{model}
\label{model:rounding_errors}
Let the Euler-Maruyama scheme use random variables $ \widetilde{Z} $ which satisfy assumption~\ref{asmp:approximate_random_variables}. The composite effects of rounding error introduce two dominant sources of error, $ \eta $ and $ \eta' $, where at each step we have 
\begin{equation*}
\overline{X}_{n+1} = \overline{X}_n + a(t_n, \overline{X}_n) \delta +  b(t_n, \overline{X}_n) \sqrt{\delta} \widetilde{Z}_n + \eta_n + \eta'_n.
\end{equation*}
The larger of these is $ \eta_n = O(\varrho (1 + \lvert\overline{X}_n\rvert)) $, which is a martingale increment, and the smaller of these  is $ \eta'_n = O(\varrho\sqrt{\delta} (1 + \lvert \overline{X}_n\rvert)) $, which is a possibly non martingale increment.
\end{model}

Inspecting model~\ref{model:rounding_errors}, the key modelling assumption requiring justification is the martingale nature of $ \eta_n $. We already justified in our discussions that $ \eta'_n = O(\varrho\sqrt{\delta} (1 + \lvert \overline{X}_n\rvert)) $, and so claiming this is a non martingale increment is no further restriction. It is straightforward to reason that $ \mathbb{E}(\lvert \eta_n\rvert) = O(\varrho (1 + \mathbb{E}(\lvert\overline{X}_n\rvert))) $, which if we take $ \mathbb{E}(\lvert \overline{X}_n\rvert ) = O(1) $ simplifies to $ \mathbb{E}(\lvert \eta_n\rvert ) = O(\varrho) $. Thus, to justify $ \eta_n $ being a martingale increment it is sufficient to reason that $ \lvert \mathbb{E}(\eta_n)\rvert \ll \mathbb{E}(\lvert \eta_n\rvert )$, which we achieve through lemma~\ref{lemma:leading_order_error}.

\begin{lemma}
\label{lemma:leading_order_error}
Assuming $ \mathbb{E}(\lvert \overline{X}_n\rvert ) = O(1) $ and the random variables $ \widetilde{Z} $ satisfy assumption~\ref{asmp:approximate_random_variables}, then under the round to nearest even rounding mode, the leading order rounding error $ \eta_n $ has $ \mathbb{E}(\eta_n) = O(\varrho^2) $.
\end{lemma}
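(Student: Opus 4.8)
The plan is to condition on the state $\overline{X}_n$ (hence on $t_n$ and the coefficient values), reduce the computation of $\mathbb{E}(\eta_n \mid \overline{X}_n)$ to the bias of round-to-nearest acting on a random variable whose conditional law is a rescaled, shifted copy of $\rho$, and then extract the cancellation built into round-to-nearest from the mean-zero structure of the floating-point error function. I would begin by writing the final addition of the update as $\overline{X}_{n+1} = R(\overline{X}_n + C_n)$, where $C_n \coloneqq A_n \oplus B_n$ is representable with $|C_n| = O(\sqrt{\delta})$, so that $\eta_n = R(\overline{X}_n + C_n) - (\overline{X}_n + C_n)$. Writing $W_n \coloneqq \overline{X}_n + C_n$ and letting $u(w) = \Theta(\varrho |w|)$ denote the spacing of $\overline{\mathbb{R}}$ near $w$ (constant on each binade, doubling across binade boundaries), the round-to-nearest-even map satisfies $R(w) - w = g - w$ on the rounding interval $[\,g - \tfrac12 u(g),\, g + \tfrac12 u(g)\,]$ around each representable $g$, which is an odd function of $w - g$; the ``even'' tie rule then matters only at the exact midpoints, contributing no first-order bias against the continuous law in play.

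Next I would identify the conditional law of $W_n$. Given $\overline{X}_n$, the increment $C_n$ is a rounded affine image of $\widetilde{Z}_n$, so it lies within $O(\varrho\sqrt{\delta})$ of $Y_n \coloneqq b(t_n,\overline{X}_n)\sqrt{\delta}\,\widetilde{Z}_n$, whose density is $\rho$ rescaled by $(b\sqrt{\delta})^{-1}$: by assumption~\ref{asmp:approximate_random_variables} this density $f$ is bounded, has all moments, is $C^\infty$ off a finite set of $M$ points, and the clause $\sum_k 2^{2k}\max_{y' \in [2^k, 2^{k+1}]}\rho(y' - \alpha) < \infty$ guarantees that the series $\sum_k u_k^2\,[\,\text{density on binade }k\,]$ which will appear (with $u_k \sim 2^k\varrho$ and $\alpha$ playing the role of $\overline{X}_n/(b\sqrt{\delta})$) converges. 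The atoms of the genuinely discrete $C_n$ are spaced $O(\varrho\sqrt{\delta})$, far finer than any relevant scale, so replacing the law of $C_n$ by the density $f(\cdot - \overline{X}_n)$ of $W_n$ costs only a lower-order, Riemann-sum-type error.

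The core estimate is then $\mathbb{E}(\eta_n \mid \overline{X}_n) = -\sum_{g} \int_{g - u(g)/2}^{g + u(g)/2} (w - g)\, f(w - \overline{X}_n)\, \dd{w}$, with $g$ ranging over representable numbers near $\overline{X}_n$. On each rounding interval the oddness of $w - g$ annihilates the value $f(g - \overline{X}_n)$, and a Taylor expansion leaves $-\tfrac{1}{12}u(g)^3 f'(g - \overline{X}_n)$ plus a higher-order remainder; summing over $g$ with the grid-point density $1/u(w)$ turns this into $-\tfrac{1}{12}\int f'(w - \overline{X}_n)\, u(w)^2\, \dd{w}$. Since $u(w)^2 = O(\varrho^2 w^2)$, this is $O(\varrho^2)$ times $\int f'(w - \overline{X}_n)\, w^2\, \dd{w}$, which integration by parts reduces to second moments of $Y_n$ with vanishing boundary terms, all finite by the decay clause above; the only obstruction to this leading piece vanishing outright is the staircase (binade-boundary) behaviour of $w \mapsto u(w)$ together with the $M$ discontinuities of $f$, each of which one checks contributes at order $O(\varrho^2)$ (with $\delta$-dependent prefactors absorbed into the implied constant, as elsewhere in the paper; for the exact Gaussian, where $\rho$ is smooth and $M = 0$, the bound is clean). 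Hence $\mathbb{E}(\eta_n \mid \overline{X}_n) = O(\varrho^2(1 + \overline{X}_n^2))$, and taking the outer expectation with $\mathbb{E}(\overline{X}_n^2) = O(1)$ — standard for the Euler scheme under the running assumptions — gives $\mathbb{E}(\eta_n) = O(\varrho^2)$, which is in particular $\ll \mathbb{E}(|\eta_n|) = O(\varrho)$, as required to model $\eta_n$ as a martingale increment.

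I expect the bookkeeping in the last step to be the main obstacle: the leading Taylor term would cancel identically were the floating-point spacing exactly proportional to $|w|$, so the surviving $O(\varrho^2)$ bias is produced entirely by second-order effects — the discreteness of the spacing at binade boundaries, the finite discontinuity set of $\rho$, the discreteness of $C_n$ and the attendant round-to-nearest-even tie handling, and the passage from $Y_n$ to $C_n$ — and showing that each is genuinely $O(\varrho^2)$, uniformly enough to survive the outer expectation, is precisely where the decay and piecewise-smoothness clauses of assumption~\ref{asmp:approximate_random_variables} are consumed. A compact alternative packaging of the same cancellation expands the sawtooth $R(w) - w$ in its Fourier series and reads $\mathbb{E}(\eta_n \mid \overline{X}_n)$ off as a sum of Fourier coefficients of $f$ evaluated at the large frequencies $2\pi m/u$, which decay like $u/m$ for a piecewise-$C^1$ density and faster than any power of $u$ for a smooth one.
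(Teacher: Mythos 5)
Your overall strategy is the same as the paper's: condition on $\overline{X}_n$, view the error of the final addition as the round-to-nearest sawtooth evaluated at a random variable whose law is a shifted/rescaled copy of $\rho$, use the local oddness of the sawtooth on each rounding interval to kill the $\rho(y-\alpha)$ term and leave a $\varsigma^3\rho'$ contribution, sum over intervals binade by binade using $\varsigma\propto\varrho\,2^k$ together with the decay clause of assumption~\ref{asmp:approximate_random_variables}, and handle the finitely many discontinuities of $\rho$ separately via the bound $K$. The paper organises this as a three-term telescoping of $\eta$ (the errors of $R(\alpha+\beta)$, of $R(\beta)$, and the discrepancy $R(\alpha+R(\beta))-R(\alpha+\beta)$), whereas you work directly with $R(\overline{X}_n+C_n)-(\overline{X}_n+C_n)$; these are equivalent reorganisations, and your Fourier-series remark is a legitimate alternative packaging of the same cancellation.

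The one place where your argument under-delivers is the step you dispatch as ``a lower-order, Riemann-sum-type error'': the passage from the genuinely discrete law of $C_n=A_n\oplus B_n$ (atoms spaced $\varsigma'\sim\varrho\sqrt{\delta}$) to the continuous density $f$. Because the sawtooth $R(w)-w$ has jumps of size $2\varsigma$ at the midpoints, the generic Riemann-sum error bound is of order (jump)$\times$(atom spacing)$\times$(density) per rounding interval, which after summation gives $O(\varsigma')=O(\varrho\sqrt{\delta})$ --- small enough to justify the martingale modelling ($\ll\varrho$), but not the stated $O(\varrho^2)$ when $\sqrt{\delta}\gg\varrho$. Recovering $O(\varrho^2)$ requires the signed cancellation you only gesture at: under round-to-nearest-even the two tie-break midpoints flanking an even grid point both round inward, so their $\pm2\varsigma$ discrepancies cancel up to a $\rho'$ difference, leaving $O(\varsigma^2\varsigma'\,\lvert\rho'\rvert)$ per interval. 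This is precisely the content of the paper's analysis of the third term $R(\alpha+R(\beta))-R(\alpha+\beta)$ (its figure on tie-break errors and the ensuing integral with the $\rho(\cdot-\varsigma-\varsigma')-\rho(\cdot+\varsigma)$ difference), and it is the half of the proof your proposal asserts rather than carries out.
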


\begin{proof}
The operation producing $ \eta_n $ is the floating point addition between $ \overline{X}_n $ and $ (A_n \oplus B_n) $.  Dropping the subscripts for brevity, we denote this by $ \alpha \oplus R(\beta) $, where $ \alpha \coloneqq  \overline{X}_n = O(1) $ and   $ \beta \coloneqq A_n + B_n = O(\sqrt{\delta}) $. The absolute rounding error $ \eta $ is then given by 
\begin{equation*}
\eta \coloneqq (\alpha \oplus R(\beta)) - (\alpha + R(\beta)) \equiv (R(\alpha + \beta) - (\alpha + \beta)) - (R(\beta) - \beta) +  (R(\alpha + R(\beta)) - R(\alpha + \beta)), 
\end{equation*}
where we will bound the three parenthesised differences in turn. 

Inspecting the first term, we can see this is the absolute error resulting from rounding the quantity $ \alpha + \beta $. Without much loss of generality, as we have assumed $ \alpha = O(1) $, let us suppose $ \alpha \in (1, 2) $. Using IEEE floating point representation, the set of representable numbers $ (1, 2) \cap \overline{\mathbb{R}} $ will all be equally spaced. Defining the quantity $ z \coloneqq \alpha + \beta $, this will fall inside some interval $I_y \coloneqq [y-\varsigma, y + \varsigma] $, where $ y\pm\varsigma $ are adjacent floating point numbers. Without loss of generality, we assume $ y - \varsigma $ is odd and $ y + \varsigma $ is even, where we either have $ z < y $ and we round down, or $ z \geq y $ and we round up, as depicted in figure~\ref{fig:round_to_nearest_even_error}.

\begin{figure}[htb]
\centering
\hfil
\subfigure[Possible values for $ z $ in $ I_y $, denoting $ z $ with ``$ \square $''.\label{fig:round_to_nearest_even_error}]{\includegraphics{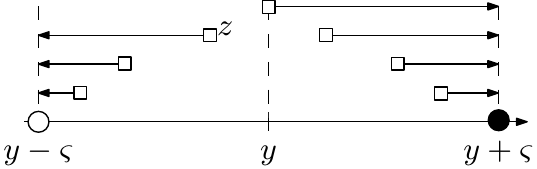}}\hfil
\subfigure[Possible values for $ \zeta $ in $ I'_y $, demonstrating when the round to nearest even tie break causes rounding error by shaded regions. We denote $ \zeta $ with ``$ \blacksquare $'', and $ z $ with ``$ \square $''.\label{fig:round_to_nearest_even_tie_break_error}]{\includegraphics{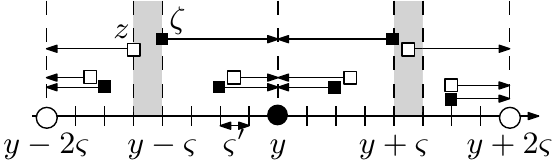}}\hfil

\caption{Rounding to the nearest even. We denote even representable values using ``\raisebox{-0.25em}{\Huge$ \bullet $}'', odd values using ``\raisebox{0.05em}{$ \bigcirc $}''. Arrows show the values rounded to.}
\label{fig:round_to_nearest_even}
\end{figure}

We can then evaluate the expectation $ \mathbb{E}((R(z) - z)\indicatorfn_{\{z \in I_y\}}) $ where we have 
\begin{equation*}
\mathbb{E}((R(z) - z)\indicatorfn_{\{z \in I_y\}}) 
= \mathbb{E}((R(z) - z)\indicatorfn_{\{z \in [y-\varsigma, y)\}}) 
+ \mathbb{E}((R(z) - z)\indicatorfn_{\{z \in [y, y+\varsigma]\}}).
\end{equation*}
These expectations can be written as integrals, giving
\begin{equation*}
\mathbb{E}((R(z) - z)\indicatorfn_{\{z \in I_y\}}) 
= \int_{y-\varsigma}^{y} ((y-\varsigma) - z) \mathbb{P}(\dd{z}) 
+ \int_{y}^{y+\varsigma} ((y+\varsigma) - z) \mathbb{P}(\dd{z}).
\end{equation*}
At time $ t_n $, the variable $ \overline{X}_n $ is $ \mathcal{F}_n $-measurable, but $ Z_n $ is $ \mathcal{F}_{n+1} $-measurable, and thus $ z $ and $ \beta $ will have the same distribution as $ \widetilde{Z} $. Denoting the probability density function of $ \beta $ as $ \rho $, which by extension satisfies assumption~\ref{asmp:approximate_random_variables}, we obtain
\begin{equation*}
\mathbb{E}((R(z) - z)\indicatorfn_{\{z \in I_y\}}) 
= \int_0^{\varsigma} (\varsigma - z) (\rho(y- \alpha + z) - \rho(y- \alpha -z)) \dd{z}.
\end{equation*}
As $ \varsigma \ll 1 $, if $ \rho $ is smooth everywhere in the interval $ I_y $, then we can approximate this using a Taylor series expansion, otherwise we use the bound from assumption~\ref{asmp:approximate_random_variables}, obtaining
\begin{equation*}
\lvert \mathbb{E}((R(z) - z)\indicatorfn_{\{z \in I_y\}}) \rvert 
\leq  \begin{cases}
C \varsigma^3 \lvert \rho'(y - \alpha)\rvert  + O(\varsigma^5 \lvert \rho'''(y-\alpha)\rvert )& \text{if } I_y \cap \mathcal{M} = \emptyset \\
C \varsigma^2 K & \text{if } I_y \cap \mathcal{M} \neq \emptyset,
\end{cases}
\end{equation*}
for some arbitrary constant $ C $.

Using this expectation in the law of total expectation, then in the limits $ I_y \to  \dd{I_y} $ we obtain
\begin{equation*}
\mathbb{E}(R(z) - z)  
=  \mathbb{E}(\mathbb{E}(R(z) - z\mid z\in I_y))  
\approx \int_{\mathbb{P}(I_y) > 0} \dfrac{ \mathbb{E}((R(z) - z)\indicatorfn_{\{z \in I_y\}})}{\mathbb{P}(I_y)} \mathbb{P}(\dd{I_y}) 
\approx \int  \dfrac{ \mathbb{E}((R(z) - z)\indicatorfn_{\{z \in I_y\}})}{2\varsigma}  \dd{y},
\end{equation*}
where in the last approximation we used $ \mathbb{P}(I_y) \approx \rho(y - \alpha) 2\varsigma $ and $ \mathbb{P}(\dd{I_y}) \approx \rho(y - \alpha) \dd{y} $. In the limit $ \delta \ll 1 $ then we can approximate our integration domain as $ y \in  [0, \infty) $, giving
\begin{equation*}
\lvert \mathbb{E}(R(z) - z) \rvert 
\leq 
\left\lvert \int_0^\infty C \varsigma^2 \rho'(y - \alpha) \indicatorfn_{\{I_y\cap \mathcal{M} = \emptyset\}} \dd{y} \right\rvert 
+ \int_0^\infty C \varsigma K \indicatorfn_{\{I_y\cap \mathcal{M} \neq \emptyset\}}\dd{y}.
\end{equation*}
In IEEE representation, $ \varsigma $ is a constant between powers of two, and thus $ 2\varsigma = \varrho 2^k $  for $ k \in \mathbb{Z} $. Consequently, the first integral is readily decomposed into sub intervals where $ \varsigma $ is a constant. For the second integral we let $ I^*_m $ denote the interval containing the discontinuity at position $ m \in \mathcal{M} $. These discontinuities occur at the corresponding $ k $ values $ k_m $ such that $ I^*_m \subset [2^{k_m}, 2^{k_m + 1}] $, and so we obtain
\begin{equation*}
\lvert \mathbb{E}(R(z) - z) \rvert  
\leq C \left\lvert \sum_{k=-\infty}^{\infty} \varrho^2  2^{2k}\int_{2^k}^{2^{k+1}} \rho'(y - \alpha) \indicatorfn_{\{I_y\cap \mathcal{M} = \emptyset\}} \dd{y} \right\rvert 
+ C K \sum_{m \in \mathcal{M}} \varsigma \int_{I^*_m}  \dd{y}. 
\end{equation*}
The first integral can largely be evaluated exactly. The integration domain will contain at most $ M $ intervals containing singularities, and thus $ M+1 $ subintervals of the form $ \int_{a}^{b} \rho'(y-\alpha) \dd{y} $ where $ \rho' $ is continuous in the domain $ [a,b] $, and thus $ \int_{a}^{b} \rho'(y-\alpha) \dd{y} = [\rho(y-\alpha)\rvert_{a}^{b} $. We then use the bound $  \lvert \int_{a}^{b} \rho'(y-\alpha) \dd{y}\rvert \leq \lvert \rho(a-\alpha) \rvert  + \lvert \rho(b-\alpha) \rvert \leq 2 \max_{y'\in[a,b]} \rho(y' - \alpha) $. For the second integral we use $ \int_{I^*_m}  \dd{y} = \varsigma $ to give
\begin{equation*}
\lvert \mathbb{E}(R(z) - z) \rvert  \leq C \varrho^2 \sum_{k=-\infty}^{\infty}   2^{2k} (M+1) \max_{y'\in[2^{k}, 2^{k+1}]} \rho(y' - \alpha) 
+ C K \varrho^2 \sum_{m \in \mathcal{M}} 2^{2k_m} = O(\varrho^2), 
\end{equation*}
where in the last equality we used assumption~\ref{asmp:approximate_random_variables}. This shows the desired $ O(\varrho^2) $ holds for the first parenthesised error constituting $ \eta $. The same line of reasoning similarly holds the for $ R(\beta) - \beta $ term also constituting $ \eta $ (akin to taking $ \alpha \to 0 $), arriving at an identical limiting bound.

It remains to bound the final $ R(\alpha + R(\beta)) - R(\alpha + \beta) $ term constituting $ \eta $. Unlike the previous two terms, we will see that this term only contributes a rounding error when the round to nearest even tie break rule is required, and in most scenarios $ \alpha + R(\beta)$ and $\alpha + \beta $ will round to the the same number. To tackle this final term we introduce the slightly larger interval $ I'_y \coloneqq [y-2\varsigma, y+2\varsigma] $, where $ y-2\varsigma $, $ y $, and $ y + 2\varsigma $ are all representable and adjacent. Without loss of generality we assume $ y $ is even and $ y \pm 2\varsigma $ are odd. Given $ \beta = O(\sqrt{\delta}) $ and $ \alpha  = O(1) $, we know that $ \lvert \beta - R(\beta)\rvert \leq \varsigma' $ where $ \varsigma' \ll \varsigma $, and thus $ \beta  $ is rounded first on a much finer granularity than $ \alpha + \beta $. Keeping our definition $ z \coloneqq \alpha + \beta $ and introducing $ \zeta \coloneqq \alpha + R(\beta) $, then the discrete set of values $ \zeta $ can take has a much finer granularity than the three representable numbers in $ I'_y $, namely $ \zeta \in \{y \pm n \varsigma'\} \cap I'_y$ for integers $ n \in \mathbb{N} $. We display the set of values $ \zeta $ can take in $ I'_y $ in figure~\ref{fig:round_to_nearest_even_tie_break_error}, where we demonstrate several possible rounding scenarios. 

Inspecting figure~\ref{fig:round_to_nearest_even_tie_break_error}, we can see that in most situations $ \zeta  $ and $ z $ round to the same number. These only round to different numbers when $ \zeta $ lies on a tie break value and $ z $ takes a different value that is rounded to an odd number, as indicated by the shaded regions in figure~\ref{fig:round_to_nearest_even_tie_break_error}. Thus, for the final term we have the expectation
\begin{equation*}
E((R(\zeta) - R(z))\indicatorfn_{\{z \in I'_y\}}) = 
E((R(\zeta) - R(z))\indicatorfn_{ \{z \in [y - \varsigma - \varsigma', y - \varsigma)\} } \indicatorfn_{\{\zeta = y - \varsigma\}})  +  E((R(\zeta) - R(z)) \indicatorfn_{ \{z \in (y + \varsigma, y + \varsigma + \varsigma']\} } \indicatorfn_{\{\zeta = y + \varsigma\}}).
\end{equation*}
The first expectation will round $ \zeta \to y $ and $ z \to y - 2\varsigma $, giving a nett rounding error of $ 2\varsigma $, and the second expectation will round $ \zeta \to y $ and $ z \to y + 2\varsigma $ giving a rounding error of $ -2\varsigma $. Thus, expressing these expectations as integrals we obtain
\begin{equation*}
E((R(\zeta) - R(z))\indicatorfn_{\{z \in I'_y\}})  = \int_{y - \varsigma - \varsigma'}^{y - \varsigma} 2\varsigma \mathbb{P}(\dd{z}) + \int_{y + \varsigma}^{y + \varsigma + \varsigma'} -2\varsigma \mathbb{P}(\dd{z}) = 2\varsigma \int_0^{\varsigma'} (\rho(z + y - \varsigma - \varsigma' - \overline{a}) - \rho(z + y + \varsigma - \overline{a})) \dd{z}.
\end{equation*}
For this final integral expression we can again either perform a Taylor series expansion or use our bounds from assumption~\ref{asmp:approximate_random_variables} to obtain to leading order
\begin{equation*}
\lvert E((R(\zeta) - R(z))\indicatorfn_{\{z \in I'_y\}}) \rvert \leq 
\begin{cases}
C \varsigma (\varsigma \varsigma' + (\varsigma')^2 ) \lvert \rho'(y-\alpha) \rvert  & \text{if } I'_y \cap \mathcal{M} = \emptyset \\
C\varsigma\varsigma' K & \text{if } I'_y \cap \mathcal{M} \neq \emptyset.
\end{cases}
\end{equation*}
As $ \varsigma' \ll \varsigma $, we see that this bound is equivalent to that found earlier for the other two terms constituting $ \eta $. Again, by using the law of total expectation and the same steps as before we obtain the same $ O(\varrho^2) $ bound. Combining the three bounds completes the proof. \qedhere
\end{proof}

While model~\ref{model:rounding_errors} is justified by lemma~\ref{lemma:leading_order_error}, we can appreciate that the proof of lemma~\ref{lemma:leading_order_error} makes use of several \textit{ad hoc} approximations and limiting cases. However, as our ultimate aim is only to justify our model, rather than derive it, such conveniences are permissible. This serves to illustrate from first principles why the leading order error term $ \eta $ is effectively zero mean. Overall then, lemma~\ref{lemma:leading_order_error} provides a much more rigorous justification of the zero mean nature of the leading order error than was simply asserted in the  \citet{arciniega2003rounding} model. 

To illustrate how the bound for the final nett rounding error is produced from our model for the incremental rounding error, we recall a convenient lemma from \citeauthor{giles2020approximate} \citep[lemma~4.3]{giles2020approximate} \citep[lemma~5.2.3]{sheridan2020nested}, which we present without proof as lemma~\ref{lemma:strong_error_bound}.

\begin{lemma}
\label{lemma:strong_error_bound}
Suppose for a process $ \mathcal{E}_n $ we have 
$ \mathcal{E}_{n+1} = \mathcal{E}_n + \delta \mathcal{A}_n + \sqrt{\delta} \widetilde{Z}_n \mathcal{B}_n + \Xi_n + \Theta_n $,
using a discretisation interval $ \delta $. 
We assert $ \mathcal{E}_0 = 0 $ almost surely, $ \widetilde{Z}_n $ are i.i.d.\ zero mean random variables with all finite moments bounded, and $ \mathcal{A}_n $ and $ \mathcal{B}_n $ are $ \mathcal{F}_{t_n} $-adapted with $ \lvert \mathcal{A}_n\rvert \leq L_A \lvert \mathcal{E}_n\rvert  $ and $ \lvert \mathcal{B}_n\rvert \leq L_B \lvert \mathcal{E}_n\rvert  $ for some strictly positive and finite constants $ L_A $ and $ L_B $. The process $ \Xi $ is a martingale where $ \mathbb{E}(\Xi_n\mid\mathcal{F}_{t_n}) = 0 $, and for integers $ p \geq 2 $  and a constant $ s \in \mathbb{R} $ there are finite and strictly positive constants $ c_1 $ and $ c_2 $ such that $ \mathbb{E}(\lvert\Xi_n\rvert^p) \leq c_1 \delta^{p(s + 1/2)} $, and similarly $ \mathbb{E}(\lvert\Theta_n\rvert^p) \leq c_2 \delta^{p(s + 1)} $ for $ \Theta $. Then there exists constants $ c_3 $ and $ c_4 $ which depends only on $ L_A $, $ L_B $, and $ p $ such that 
$ \mathbb{E}(\sup_{n \leq N} \lvert\mathcal{E}_n\rvert^p) \leq c_3(c_4 c_1 + c_2)\delta^{ps} $, 
where $ c_4 = 18p^{3/2}(p - 1)^{-3/2} $.
\end{lemma}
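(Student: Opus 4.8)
The plan is to represent $\mathcal{E}_n$ as the telescoping sum of its increments, separate the four contributions --- drift, diffusion, the martingale noise $\Xi$, and the systematic noise $\Theta$ --- bound each partial sum in $L^p$, and close the recursion with a discrete Gr\"{o}nwall inequality. Write $u_n \coloneqq \mathbb{E}(\sup_{j\le n}\lvert\mathcal{E}_j\rvert^p)$ and recall $N\delta = T$ is fixed. Since $\mathcal{E}_0 = 0$ almost surely, $\mathcal{E}_n = \sum_{k<n}(\delta\mathcal{A}_k + \sqrt{\delta}\widetilde{Z}_k\mathcal{B}_k + \Xi_k + \Theta_k)$; applying the $c_r$-inequality $\lvert a+b+c+d\rvert^p \le 4^{p-1}(\lvert a\rvert^p+\lvert b\rvert^p+\lvert c\rvert^p+\lvert d\rvert^p)$ inside $\sup_{n\le N}$ and taking expectations, it suffices to bound the partial sums $D_n \coloneqq \sum_{k<n}\delta\mathcal{A}_k$, $S_n \coloneqq \sum_{k<n}\sqrt{\delta}\widetilde{Z}_k\mathcal{B}_k$, $\Xi^\star_n \coloneqq \sum_{k<n}\Xi_k$ and $\Theta^\star_n \coloneqq \sum_{k<n}\Theta_k$ separately.

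For the drift term, Jensen's inequality over the index set (using $n\delta\le T$) gives $\sup_{n\le N}\lvert D_n\rvert^p \le T^{p-1}\delta\sum_{k<N}\lvert\mathcal{A}_k\rvert^p \le T^{p-1}L_A^p\,\delta\sum_{k<N}\lvert\mathcal{E}_k\rvert^p$, so $\mathbb{E}\sup_{n\le N}\lvert D_n\rvert^p \le T^{p-1}L_A^p\,\delta\sum_{k<N}u_k$. The same manipulation applied to $\Theta^\star$ with $\mathbb{E}(\lvert\Theta_k\rvert^p)\le c_2\delta^{p(s+1)}$ and $N\delta = T$ yields $\mathbb{E}\sup_{n\le N}\lvert\Theta^\star_n\rvert^p \le N^{p-1}\sum_{k<N}\mathbb{E}(\lvert\Theta_k\rvert^p)\le c_2T^p\delta^{ps}$. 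The increments of $S_n$ are martingale differences ($\widetilde{Z}_k$ zero mean and independent of $\mathcal{F}_{t_k}$, $\mathcal{B}_k$ adapted), as are those of $\Xi^\star_n$ by hypothesis; a quantitative discrete-time Burkholder--Davis--Gundy martingale moment inequality composed with Doob's $L^p$ maximal inequality --- the step that yields the constant $c_4 = 18\,p^{3/2}(p-1)^{-3/2}$ --- bounds $\mathbb{E}\sup_{n\le N}\lvert S_n\rvert^p$ by $c_4\,\mathbb{E}\bigl((\delta\sum_{k<N}\widetilde{Z}_k^2\mathcal{B}_k^2)^{p/2}\bigr)$ and $\mathbb{E}\sup_{n\le N}\lvert\Xi^\star_n\rvert^p$ by $c_4\,\mathbb{E}\bigl((\sum_{k<N}\Xi_k^2)^{p/2}\bigr)$. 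For $S$, H\"{o}lder in the $\delta$-weighted sum, independence of the $\widetilde{Z}_k$ and boundedness of $\mathbb{E}(\lvert\widetilde{Z}_k\rvert^p)$ reduce this to a bound of the form $c_4\,C\,L_B^p\,\delta\sum_{k<N}u_k$; for $\Xi^\star$, H\"{o}lder gives $(\sum_{k<N}\Xi_k^2)^{p/2}\le N^{p/2-1}\sum_{k<N}\lvert\Xi_k\rvert^p$, so with $\mathbb{E}(\lvert\Xi_k\rvert^p)\le c_1\delta^{p(s+1/2)}$ and $N\delta = T$ one obtains $\mathbb{E}\sup_{n\le N}\lvert\Xi^\star_n\rvert^p \le c_4c_1T^{p/2}\delta^{ps}$.

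Collecting the four estimates gives $u_n \le C_1(c_4c_1 + c_2)\delta^{ps} + C_2\,\delta\sum_{k<n}u_k$ with $C_1,C_2$ depending only on $T, L_A, L_B, p$. The discrete Gr\"{o}nwall lemma then delivers $u_N \le C_1(c_4c_1 + c_2)\delta^{ps}\exp(C_2T)$, and taking $c_3 \coloneqq C_1\exp(C_2T)$ finishes the argument.

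The main obstacle is pinning down the explicit constant $c_4 = 18\,p^{3/2}(p-1)^{-3/2}$: a generic ``there exists a $C_p$'' form of the martingale moment inequality is insufficient, so one must invoke a sharp quantitative version (for instance Doob's inequality with constant $p/(p-1)$ together with an explicit Burkholder-type constant for sums of martingale differences) and track how it propagates through both the diffusion and the $\Xi$ estimates. One must also ensure the Gr\"{o}nwall constant $c_3$ absorbs the $c_4$ arising from the diffusion term without letting $c_1$ or $c_2$ leak into it, and apply the passage from $\sup_{n\le N}$ to the terminal index consistently for every martingale term.
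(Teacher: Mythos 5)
Your argument is correct and follows exactly the route the paper itself indicates: the paper presents this lemma without proof, citing \citet{giles2020approximate}, and describes that proof as a combination of Jensen's inequality, the discrete Burkholder--Davis--Gundy inequality, and the discrete Gr\"{o}nwall inequality --- precisely the decomposition into drift, diffusion, martingale, and systematic terms that you carry out. The only point left open, the explicit constant $c_4 = 18p^{3/2}(p-1)^{-3/2}$ from the quantitative BDG/Doob step, is one you correctly identify as requiring the sharp version of the martingale moment inequality used in the cited source.
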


\begin{proof}
The proof is given by \citeauthor{giles2020approximate} \citep[lemma~4.3]{giles2020approximate} \citep[lemma~5.2.3]{sheridan2020nested}, and proceeds by a combination of Jensen's inequality, the discrete Burkholder-Davis-Gundy inequality, and the discrete Gr\"{o}nwall inequalities. \qedhere
\end{proof}

By considering the difference between the process $ \overline{X}_n $ calculated in high and low precision, then the result $ \mathbb{E}(\lvert \widehat{X}_N - \overline{X}_N \rvert^2) \leq  CN\varrho^2 $ from \citet[theorem~2.2]{arciniega2003rounding} immediately follows from lemma~\ref{lemma:strong_error_bound}. Furthermore, we obtain an identical bound from lemma~\ref{lemma:strong_error_bound} for the nett error that arises from model~\ref{model:rounding_errors}. 

\begin{lemma}
\label{lemma:rounding_error_two_way}
Using model~\ref{model:rounding_errors} for the rounding errors, then 
$ \mathbb{E}(\lvert \widehat{X}_N - \overline{X}_N \rvert^2) \leq  CN\varrho^2 $.
\end{lemma}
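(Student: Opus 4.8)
The plan is to apply lemma~\ref{lemma:strong_error_bound} to the difference process $\mathcal{E}_n \coloneqq \widehat{X}_n - \overline{X}_n$. Subtracting the finite precision recursion of model~\ref{model:rounding_errors} from the corresponding exactly computed Euler--Maruyama recursion driven by the same increments $\widetilde{Z}_n$, the only discrepancy between the two updates is the rounding error pair $(\eta_n,\eta'_n)$, so
\begin{equation*}
\mathcal{E}_{n+1} = \mathcal{E}_n + \delta\,\big(a(t_n,\widehat{X}_n) - a(t_n,\overline{X}_n)\big) + \sqrt{\delta}\,\widetilde{Z}_n\big(b(t_n,\widehat{X}_n) - b(t_n,\overline{X}_n)\big) - \eta_n - \eta'_n .
\end{equation*}
Setting $\mathcal{A}_n \coloneqq a(t_n,\widehat{X}_n) - a(t_n,\overline{X}_n)$, $\mathcal{B}_n \coloneqq b(t_n,\widehat{X}_n) - b(t_n,\overline{X}_n)$, $\Xi_n \coloneqq -\eta_n$ and $\Theta_n \coloneqq -\eta'_n$ puts this in exactly the form required by lemma~\ref{lemma:strong_error_bound}.

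Next I would verify the hypotheses one by one. We have $\mathcal{E}_0 = 0$ almost surely, since both schemes start from the same initial datum; the spatial Lipschitz continuity of $a$ and $b$ assumed throughout gives $\lvert\mathcal{A}_n\rvert \le L_A\lvert\mathcal{E}_n\rvert$ and $\lvert\mathcal{B}_n\rvert \le L_B\lvert\mathcal{E}_n\rvert$ with $\mathcal{A}_n,\mathcal{B}_n$ manifestly $\mathcal{F}_{t_n}$-adapted; and the $\widetilde{Z}_n$ are i.i.d., zero mean, with all moments finite by assumption~\ref{asmp:approximate_random_variables}. A standard linear growth / discrete Gr\"{o}nwall argument gives $\sup_{n\le N}\mathbb{E}(\lvert\overline{X}_n\rvert^p) < \infty$ uniformly in $\delta$ for every $p$, which upgrades the magnitude statements of model~\ref{model:rounding_errors} to the $L^p$ bounds $\mathbb{E}(\lvert\eta_n\rvert^p) = O(\varrho^p)$ and $\mathbb{E}(\lvert\eta'_n\rvert^p) = O(\varrho^p\delta^{p/2})$. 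Finally, $\Xi_n$ is a martingale increment because $\eta_n$ is one by model~\ref{model:rounding_errors} (a property underpinned by lemma~\ref{lemma:leading_order_error}; its residual $O(\varrho^2)$ conditional mean, if one wishes to be pedantic, can be absorbed into $\Theta_n$ without affecting anything below, at least in the regime $\varrho^2 \lesssim \delta$ in which the rounding error is not already dominating the discretisation error), while $\Theta_n = -\eta'_n$ need only satisfy a size bound.

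With the hypotheses in hand, the conclusion follows by choosing $p = 2$ and $s = -\tfrac12$. The required incremental bounds then read $\mathbb{E}(\lvert\Xi_n\rvert^2) \le c_1\delta^{2s+1} = c_1$ with $c_1 = O(\varrho^2)$, and $\mathbb{E}(\lvert\Theta_n\rvert^2) \le c_2\delta^{2s+2} = c_2\delta$ with $c_2 = O(\varrho^2)$, both of which hold by the previous paragraph, and lemma~\ref{lemma:strong_error_bound} delivers
\begin{equation*}
\mathbb{E}\Big(\sup_{n\le N}\lvert\mathcal{E}_n\rvert^2\Big) \le c_3(c_4 c_1 + c_2)\,\delta^{2s} = O(\varrho^2/\delta) = O(N\varrho^2),
\end{equation*}
using $N\delta = T = O(1)$; restricting the supremum to $n = N$ gives the claim. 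The one genuinely non-routine point is the bookkeeping in matching the fixed precision scale $\varrho$ to the $\delta$-indexed template of lemma~\ref{lemma:strong_error_bound}: the correct choice is $s = -\tfrac12$ with the entire $\varrho^2$ dependence carried inside the constants $c_1$ and $c_2$, and one has to notice that the extra $\sqrt{\delta}$ gained by $\eta'_n$ over $\eta_n$ is exactly what is needed to meet the stronger ($p(s+1)$ rather than $p(s+\tfrac12)$) decay demanded of $\Theta$.
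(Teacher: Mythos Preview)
Your argument is essentially the paper's: define $\mathcal{E}_n = \widehat{X}_n - \overline{X}_n$, difference the recursions, identify $\mathcal{A}_n,\mathcal{B}_n,\Xi_n,\Theta_n$, and invoke lemma~\ref{lemma:strong_error_bound} with $p=2$, $s=-\tfrac12$, $c_1,c_2\propto\varrho^2$.

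There is one discrepancy worth flagging. You assume both $\widehat{X}$ and $\overline{X}$ are driven by the same increments $\widetilde{Z}_n$, so your $\Xi_n=-\eta_n$. In the paper's conventions, however, $\widehat{X}$ uses the \emph{exact} Gaussian $Z_n$ while $\overline{X}$ uses $\widetilde{Z}_n$; differencing therefore produces an additional zero-mean term $\sqrt{\delta}\,b(t_n,\widehat{X}_n)(Z_n-\widetilde{Z}_n)$, and the paper sets $\Xi_n=\sqrt{\delta}\,b(t_n,\widehat{X}_n)(Z_n-\widetilde{Z}_n)-\eta_n$. This extra piece is still a martingale increment and contributes $O(\delta^{p/2})$ to $\mathbb{E}(\lvert\Xi_n\rvert^p)$, which the paper then absorbs alongside the $O(\varrho^p)$ contribution before applying lemma~\ref{lemma:strong_error_bound}. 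Apart from this bookkeeping, your verification of the hypotheses and the choice of $s$ match the paper exactly; your remarks on the $O(\varrho^2)$ residual mean of $\eta_n$ and on moment control of $\overline{X}_n$ are additional care not made explicit in the paper.
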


\begin{proof}
Defining $\mathcal{E}_n \coloneqq \widehat{X}_n - \overline{X}_n $ and differencing the appropriate Euler-Maruyama schemes for $ \widehat{X}_n $ and $ \overline{X}_n $ we obtain 
\begin{equation*}
\mathcal{E}_{n+1} = \mathcal{E}_n + \delta \underbrace{(a(t_n, \widehat{X}_n) - a(t_n, \overline{X}_n))}_{\mathcal{A}_n} {} + \sqrt{\delta} \widetilde{Z}_n \underbrace{(b(t_n, \widehat{X}_n) - b(t_n, \overline{X}_n))}_{\mathcal{B}_n} {}  + \underbrace{\sqrt{\delta} b(t_n, \widehat{X}_n) (\widehat{Z}_n - \widetilde{Z}_n) - \eta_n}_{\Xi_n} {} - \underbrace{\eta'_n}_{\Theta_n},
\end{equation*}
where we have indicated the equivalent terms in lemma~\ref{lemma:strong_error_bound}. The bounds on $ \mathcal{A}_n $ and $ \mathcal{B}_n $ follow from the standard assumptions of $ a $ and $ b $ being spatially Lipschitz continuous. Furthermore, for the $ \Xi_n $ term this is zero mean and has $ \mathbb{E}(\lvert \Xi_n\rvert^p) \leq O(\delta^{p/2}) + O(\varrho^p) \leq O(\varrho^p) $, corresponding to $ s = -\tfrac{1}{2} $ in lemma~\ref{lemma:strong_error_bound} and $ c_1 \propto \varrho^p $. Similarly, from model~\ref{model:rounding_errors} we have $ \mathbb{E}(\lvert \Theta_n\rvert^p) \leq C \varrho^p\delta^{p/2}$, also corresponding to $ s = -\tfrac{1}{2} $ and $ c_2 \propto \varrho^p $. Thus from lemma~\ref{lemma:strong_error_bound} we obtain $ \mathbb{E}(\lvert \widehat{X}_N - \overline{X}_N \rvert^p) \leq O(\varrho^p \delta^{-p/2}) $, which when we set $ p = 2 $ and note that $ \delta \propto \tfrac{1}{N} $ obtains the desired bound. \qedhere
\end{proof}

The significant insight provided by lemma~\ref{lemma:rounding_error_two_way}, which we saw in its proof when we applied lemma~\ref{lemma:strong_error_bound}, is that the nett contributions from the $ \eta $ and $ \eta' $ processes were grow at the same rate. Although the $ \eta' $ process may be smaller than $ \eta $ by a factor of $ \sqrt{\delta} $ in model~\ref{model:rounding_errors}, because it is not zero mean, its contributions do not cancel, and thus build up at a faster rate. Furthermore, this demonstrates that the updating process is permitted a systematic rounding error process, provided it is $ O(\sqrt{\delta}) $.

The variance predicted by lemma~\ref{lemma:rounding_error_two_way} is shown in figure~\ref{fig:two_way_variance_approximation}. For the approximate random variables we have used the high fidelity piecewise cubic approximation from \citet{giles2020approximating}. This is to ensure that the $ \eta_n $ term within the $ \Xi_n $ process in the proof of lemma~\ref{lemma:rounding_error_two_way} is the dominant term for moderately low precisions. Using the mpmath Python library \citep{mpmath} we adjust the number of bits used in mantissa for the low precision approximate random variables. We use 7, 10, and 23 bits, corresponding to the precisions for \texttt{bfloat16}, half, and single precisions respectively. We also consider am artificial precision using 16 bits for the mantissa to represent an intermediate precision level between half and single precision. For the stochastic process we simulate a geometric Brownian motion where $ a(t, X_t) \equiv \mu X_t $ and $ b(t, X_t) \equiv \sigma X_t $ for strictly positive constant $ \mu $ and $ \sigma $. Following the setup from \citet{giles2008multilevel} we choose $ \mu = 0.05 $, $ \sigma = 0.2 $, and $ X_0 = 1 $.

We can see from figure~\ref{fig:two_way_variance_approximation} that we approximately observe the growth in the variance for half precision anticipated by lemma~\ref{lemma:rounding_error_two_way}. However, the brain float precision seems to exhibit a variance closer to the worst case bound from \citet{omland2016mixed}. Interestingly, we see that the higher precision intermediate and single precision results appear to exhibit an approximate $ O(1) $ variance. The reason for this is because at such high precisions, the $ \eta $ term within $ \Xi_n $ is the smaller of the two terms, with the approximation error $ Z_n - \widetilde{Z}_n $ being the more dominant error. As shown by \citet{giles2020approximate} this produces a resultant error independent of the discretisation $ \delta $, and so appears $ O(1) $, as observed. Furthermore, the intermediate and single precision variances are approximately the same values, again indicating that this error is dominated by the approximate random variables' fidelity rather than their floating point precision. 

\begin{figure}[htb]
\centering
\hfil
\subfigure[Without Kahan compensated summation.\label{fig:two_way_variance_approximation}]{\includegraphics{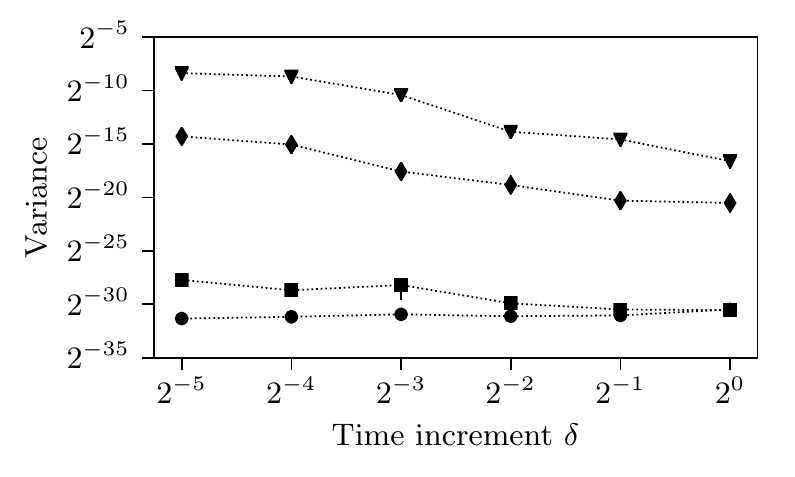}}\hfil 
\subfigure[With Kahan compensation summation.\label{fig:two_way_variance_kahan}]{\includegraphics{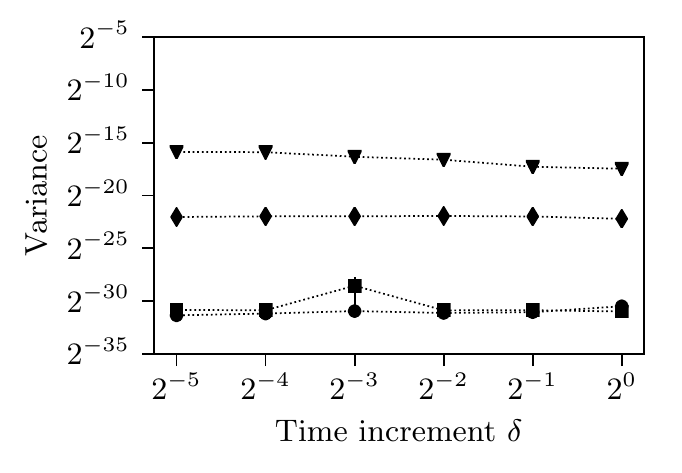}}\hfil

\caption{The variance of the difference between the exact Euler-Maruyama estimate and the estimate using low precision approximate random variables from a piecewise cubic approximation with different precisions, corresponding to lemma~\ref{lemma:rounding_error_two_way}. The precisions use differing numbers of bits for the mantissa. ($ \blacklozenge $) 7 bits. ($ \blacksquare $) 10 bits. (\raisebox{-0.1em}{\huge$ \bullet$})
16 bits. ({\large $ \blacktriangledown $}) 23 bits.}
\label{fig:two_way_variances}
\end{figure}

\subsection{Kahan compensated summation}
\label{sec:kahan_compensated_summation}

The Euler-Maruyama scheme consists of performing a cumulative summation of a sequence of update terms. The problem of summing a sequence of floating point numbers and minimising the cumulative rounding error is well known, and there have been a variety of methods developed to overcome this, such as \emph{pair wise summation} or \emph{compensated summation} \citep[4.1]{higham2002accuracy}. As the Euler-Maruyama scheme is a sequential and incremental procedure, a compensated summation is an appropriate technique. As our schemes are motivated by computational speed, then \emph{Kahan compensated summation} \citep{kahan1965further}, being the least numerically intensive, is the most suitable candidate for incorporating into the Euler-Maruyama scheme. The Kahan compensated summation adds a given increment, and then subtracts away the computed summation prior to that increment. The difference of this inferred increment from the original provides an estimate for the incurred rounding error, which is then adjusted for when adding subsequent terms in the sequence. The Kahan compensated summation procedure is outlined in algorithm~\ref{algo:kahan_compensated_summation}, and a C implementation incorporating this into the Euler-Maruyama scheme in single precision is shown in code~\ref{code:c:euler_maruyama_scheme_with_kahan_compensated_summation}. Interestingly, the related use of Kahan compensated summation in the numerical solution of ordinary differential equations was first proposed by \citet{vitasek1969numerical}, and is demonstrated by \citet[pages~86--87]{higham1993accuracy}.

\begin{algorithm}[htb]
\DontPrintSemicolon
\KwIn{A sequence $ \{x_1, x_2, \ldots, x_N \} $ of $ N $ floating point numbers.}
\KwOut{A high accuracy estimate of the summation $ \sum_{i=1}^{N} x_i $.}
Initialise both an accumulator $ a $ and compensation $ c $ to zero.\;
\For{$ x_i \in \{x_1, x_2, \ldots, x_n \} $}{
    Calculate a compensated increment $ y \leftarrow x_i - c $.\;
    Add the compensated increment $ a_{\mathrm{new}} \leftarrow a + y $ and keep the original $ a_{\mathrm{original}} \leftarrow a $.\;
    Update the compensation $ c \leftarrow (a_{\mathrm{new}} -  a_{\mathrm{original}}) - y $.\;
    Update the accumulator $ a \leftarrow a_{\mathrm{new}} $.\;
}
Use $ a $ to estimate $ \sum_{i=1}^{N} x_i $.\;
\caption[Kahan compensated summation ]{Kahan compensated summation.}
\label{algo:kahan_compensated_summation}
\end{algorithm}

\begin{lstfloat}[htb]
\begin{lstlisting}[style=C, caption={C implementation of the Euler-Maruyama scheme using Kahan compensated summation from  algorithm~\ref{algo:kahan_compensated_summation}.}, label={code:c:euler_maruyama_scheme_with_kahan_compensated_summation}]
float compensated_euler_maruyama_scheme(float X, float dX, float * compensation)
{
    float compensated_increment = dX - (*compensation); 
    float accumulated_sum = X + compensated_increment;
    (*compensation) = (accumulated_sum - X) - compensated_increment;
    return accumulated_sum;
}
\end{lstlisting}
\end{lstfloat}

An error analysis of Kahan compensated summation is provided by \citet[page~791, (3.11)]{higham1993accuracy}, \citet[Excercise~19, pages 229 and 571--573]{knuth2014art}, and \citet{goldberg1991every}. The Kahan compensated summation shown in  algorithm~\ref{algo:kahan_compensated_summation} has the overall absolute and relative error bounds of 
\begin{equation*}
(2\varrho + O(N\varrho^2))\sum_{i=1}^{N} \abs{x_i}
\qquad \text{and} \qquad
(2\varrho + O(N\varrho^2))\dfrac{\sum_{i=1}^{N} \lvert x_i\rvert}{\left\lvert \sum_{i=1}^{N} x_i\right\rvert} 
\end{equation*}
respectively. The ratio of
$ \sum_{i=1}^{N} \abs{x_i} $ to $\lvert \sum_{i=1}^{N} x_i\rvert $
is known as the condition number, representing the sensitivity of the summation to rounding error \citep{higham2002accuracy,trefethen1997numerical}. For a series of zero mean random variables, the condition number can be expected to be $ O(\sqrt{N}) $, but for several stochastic process (e.g. geometric Brownian motion) the drift term causes the increments to have a non zero mean, and hence the condition number approximately limits to a constant. 

Based on the usual error analysis of Kahan compensated summation, one might expect the leading order error from a Kahan compensated Euler-Maruyama scheme should mostly have an $ O(1) $ error dependence on $ N $, until eventually the higher order term takes effect for sufficiently large $ N $. However, inspecting model~\ref{model:rounding_errors} and code~\ref{code:c:euler_maruyama_scheme_with_kahan_compensated_summation} we see that the Kahan compensated summation is designed to tackle the leading order $ \eta $ term. However, in computing the Euler-Maruyama update, the smaller $ \eta' $ error is not compensated for, and thus will persist. Thus, even with Kahan compensated summation, we see from lemma~\ref{lemma:rounding_error_two_way} that we still expect a nett leading order rounding error $ O(\sqrt{N}\varrho) $. Overall then, we see that as we increase $ N $, we first expect for small $ N $ an $ O(\varrho) $ error, for very large $ N $ an $ O(N\varrho^2) $ error, and possibly an intermediate $ O(\sqrt{N}\varrho) $ error. 

The variances from the approximations obtained by incorporating a Kahan compensated summation into the modified Euler-Maruyama scheme are shown in figure~\ref{fig:two_way_variance_kahan}. The first thing to notice from this is that all the variances appear to be $ O(1) $, in keeping with the leading order error anticipated. For the brain float and half precision variances, there is a separation between these which is approximately a factor of $ 2^{-6} $. As we expect the leading order error to be $ O(\varrho) $, then we expect a reduction in the variance of approximately $ \tfrac{\varrho^2_{10}}{\varrho^2_{7}} \approx \approx 2^{-6} $, where we have used the subscript to denote the number of mantissa bits. Hence we can see the reduction in variance is approximately as anticipated. As for the intermediate and single precisions, we see these cluster on top of each other, again indicating that the dominant error is from approximating the Gaussian distribution rather than from finite precision arithmetic. 

\section{Multilevel Monte Carlo}
\label{sec:multilevel_monte_carlo}

In section~\ref{sec:numerical_solutions_to_stochastic_differential_equations} we introduced the usual Euler-Maruyama scheme, and a modified version which utilises approximate random variables. The reason for introducing approximate random variables was to benefit from their improved speed. This improvement will be magnified if we simultaneous also reduce the floating point precision used in the modified Euler-Maruyama scheme. After reviewing the models developed by \citet{arciniega2003rounding} and \citet{omland2016mixed} for describing the rounding error incurred during the Euler-Maruyama scheme, we have presented our own model for the rounding error, which presents a similar model to the description by \citet{arciniega2003rounding}. Furthermore, our model accounts for using approximate random variables, and is also shored up with a more rigorous mathematical justification. Ultimately though, we now have two possible types of simulation: an expensive but precise simulation using exact Gaussian random variables in a high floating point precision, and a cruder and cheaper simulation using approximate Gaussian random variables in low precision. The accuracy of the former can be combined with the speed of the latter by a multilevel Monte Carlo formulation \citep{giles2008multilevel,giles2015multilevel_review}. 

As a brief review of multilevel Monte Carlo, and how our work fits within this, let us suppose we wish to compute the expectation of some functional $ P $ which acts on the terminal solution $ X_T $ of the underlying stochastic process. The simulations can be performed using various levels of temporal discretisation, where we index the levels by $ l $. We suppose there are $ L+1 $ levels such that $ l\in \{0, 1, 2, \ldots, L\} $, where $ l = 0 $ corresponds to the coarsest possible discretisation, and $ l=L $ the finest. The approximation coming from the usual Euler-Maruyama scheme for a particular level $ l $ we denote by $ \widehat{P}_l $. Additionally, we denote those arising from the modified Euler-Maruyama scheme using high precision approximate random variables by $ \widetilde{P}_l $, and low precision approximate random variables by $ \overline{P}_l $. For notational simplicity we use the convention $ \widehat{P}_{-1} \coloneqq \widetilde{P}_{-1} \coloneqq \overline{P}_{-1} \coloneqq 0 $. \citet{giles2020approximate,giles2020approximating} suggest incorporating the approximate random variables using the nested multilevel Monte Carlo framework
\begin{equation*}
\mathbb{E}(P) 
\approx
\mathbb{E}(\widehat{P}_L) 
= 
\sum_{l = 0}^{L} \mathbb{E}(\widehat{P}_l - \widehat{P}_{l-1}) 
= 
\sum_{l = 0}^{L} \mathbb{E}(\widetilde{P}_l - \widetilde{P}_{l-1}) +  \mathbb{E}(\widehat{P}_l - \widehat{P}_{l-1} - \widetilde{P}_l + \widetilde{P}_{l-1})
= 
\sum_{l = 0}^{L} \mathbb{E}(\overline{P}_l - \overline{P}_{l-1}) +  \mathbb{E}(\widehat{P}_l - \widehat{P}_{l-1} - \overline{P}_l + \overline{P}_{l-1}),
\end{equation*}
where the first approximation is the regular Monte Carlo procedure \citep{glasserman2013monte}, the first equality is the usual multilevel Monte Carlo decomposition \citep{giles2008multilevel},  the third equality is the nested multilevel Monte Carlo framework \citep{giles2020approximate,giles2020approximating}, and the final equality is the same nested multilevel framework utilising low precision approximate random variables.  

Importantly, for a given level $ l $, the fine path's discretisation $ \delta^{\mathrm{f}} $ and the coarse path's $ \delta^{\mathrm{c}} $ are given by $ \delta^{\mathrm{f}} = 2^{-l} $ and $ \delta^{\mathrm{c}} = 2 \delta^{\mathrm{f}} $ respectively. The coarse path's Weiner increments are produced by the pairwise summation of the fine path's Wiener increments. Furthermore, the Weiner increments $ \Delta W $ are produced using Gaussian increments $ Z $ where $ \Delta W \coloneqq \sqrt{\delta} Z $, and the Gaussian increments are produced using the inverse transform method where $ Z_n = \Phi^{-1}(U_n) $. Crucially, the exact random variables $ Z_n $ and approximate random variables $ \widetilde{Z}_n $ are tightly coupled by ensuring they are both generated using the same underlying random variables, where $ Z_n = \Phi^{-1}(U_n) $ and $ \widetilde{Z}_n = \widetilde{\Phi}^{-1}(U_n) $, with $ U_n $ being the same for both.

\citet{giles2020approximating} consider the usual multilevel estimator $ \hat{\theta} $ and the nested multilevel estimator $ \overline{\theta} $ where
\begin{equation*}
\hat{\theta}  \coloneqq \sum_{l=0}^{L} \dfrac{1}{\widehat{m}_l} \sum^{\widehat{m}_l} \widehat{P}_l - \widehat{P}_{l-1}
\qquad \text{and} \qquad
\overline{\theta} \coloneqq \sum_{l=0}^L \dfrac{1}{\overline{m}_l} \sum^{\overline{m}_l} \overline{P}_l - \overline{P}_{l-1} + \dfrac{1}{\overline{M}_l} \sum^{\overline{M}_l} \widehat{P}_l - \widehat{P}_{l-1} - \overline{P}_l + \overline{P}_{l-1},
\end{equation*}
where $ \widehat{m}_l $, $ \overline{m}_l $, and $ \overline{M}_l $ are the number of paths generated, each with a computational cost of $ \hat{c}_l $, $ \overline{c}_l $, and $ \overline{C}_l $, and variance $ \hat{v}_l $, $ \overline{v}_l $, and $ \overline{V}_l $ respectively. Letting $ \widehat{T} $ denote the total computational time to achieve a mean squared error $ \varepsilon^2 $ using the estimator $ \hat{\theta} $, and similarly $ \overline{T} $ using $ \overline{\theta} $, then \citet{giles2020approximating} show
\begin{equation*}
\widehat{T} = 2\varepsilon^{-2}\left(\sum_{l=0}^L \sqrt{\hat{v}_l \hat{c}_l}\right)^2 
\qquad \text{and} \qquad 
\overline{T} = 2\varepsilon^{-2} \left(\sum_{l=0}^L \sqrt{\overline{v}_l \overline{c}_l} + \sqrt{\overline{V}_l \overline{C}_l}\right)^2,
\end{equation*}
and hence an overall saving of
\begin{equation*}
\overline{T} 
\approx 2\varepsilon^{-2} \left(\sum_{l=0}^L \sqrt{\hat{v}_l \hat{c}_l} \left( \sqrt{\dfrac{\overline{v}_l\overline{c}_l}{\hat{v}_l\hat{c}_l}} + \sqrt{\dfrac{\overline{V}_l \overline{C}_l}{\hat{v}_l \hat{c}_l}}\right)\right)^2 
\leq \widehat{T} \max_{l \leq L} \left\{ \dfrac{\overline{v}_l\overline{c}_l}{\hat{v}_l\hat{c}_l} \left(1 + \sqrt{\dfrac{\overline{V}_l \overline{C}_l}{\overline{v}_l \overline{c}_l}}\right)^2\right\}.
\end{equation*}
When the approximation's fidelity is such that $ \overline{v}_l \approx \hat{v}_l $, the term $ \tfrac{\overline{v}_l\overline{c}_l}{\hat{v}_l\hat{c}_l} \approx \tfrac{\overline{c}_l}{\hat{c}_l}$ measures the potential time savings, and the term $ (1 + (\overline{V}_l \overline{C}_l / \overline{v}_l \overline{c}_l)^{1/2})^2 $ assesses the efficiency at realising these savings. Achieving a balance between these two is required, where the approximation should be sufficiently fast so there is the potential for large savings, but of a sufficient fidelity so the variance of the more expensive four way difference is considerably lower than the variance of the cheaper two way difference.

\citet{giles2020approximate,giles2020approximating} investigate the variance of this final four way difference when using approximate random variables assuming infinite precision arithmetic for a variety of functional types \citep[lemmas~4.10 and 4.11]{giles2020approximate} \citep[corollaries~6.2.6.2 and 6.2.6.3]{sheridan2020nested}. They find that for Lipschitz continuous and differentiable functionals that 
\begin{equation*}
\lVert \widehat{P}_l - \widehat{P}_{l-1} - \widetilde{P}_l + \widetilde{P}_{l-1}\rVert_p 
\leq O(\delta^{1/2} \lVert Z - \widetilde{Z} \rVert_{p'}) 
\end{equation*}
for some  $ p' $ such that $ 2 \leq p < p' < \infty $. Restricting our attention to functionals which are also Lipschitz continuous and differentiable, and will consider just the underlying process, taking $ P(X)\equiv X $ for simplicity. 

In an analogous manner to lemma~\ref{lemma:rounding_error_two_way} we can consider the variance of this four way difference arising in the nested multilevel Monte Carlo framework when the precision is simultaneously lowered when switching to approximate random variables, giving rise to lemma~\ref{lemma:rounding_error_four_way}.

\begin{lemma}
\label{lemma:rounding_error_four_way}
For fine and coarse path simulations constructed using approximate random variables as described by \citet{giles2020approximate}, then with rounding errors described by model~\ref{model:rounding_errors} we have 
\begin{equation*}
\mathbb{E}(\lvert \widehat{X}_l - \widehat{X}_{l-1} - \overline{X}_l + \overline{X}_{l-1}\rvert^2) 
\approx O(\delta\, \mathbb{E}(\lvert Z - \widetilde{Z} \rvert^{2 + \epsilon})) + O(\delta^{-1}\varrho^2) 
\end{equation*}
as the discretisation $ \delta $ decreases for some $ \epsilon \in (0, \infty)$.
\end{lemma}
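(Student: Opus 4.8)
The plan is to reduce the four-way difference to two quantities that are already controlled: the infinite-precision four-way bound of \citeauthor{giles2020approximate} and the two-way rounding bound of lemma~\ref{lemma:rounding_error_two_way}. First I would insert the high-precision approximate-random-variable paths $\widetilde{X}_l$ and $\widetilde{X}_{l-1}$ and split
\begin{equation*}
\widehat{X}_l - \widehat{X}_{l-1} - \overline{X}_l + \overline{X}_{l-1} = (\widehat{X}_l - \widehat{X}_{l-1} - \widetilde{X}_l + \widetilde{X}_{l-1}) + (\widetilde{X}_l - \overline{X}_l) - (\widetilde{X}_{l-1} - \overline{X}_{l-1}),
\end{equation*}
and then apply Minkowski's inequality (equivalently $\mathbb{E}(\lvert a+b+c\rvert^2) \le 3(\mathbb{E}(\lvert a\rvert^2)+\mathbb{E}(\lvert b\rvert^2)+\mathbb{E}(\lvert c\rvert^2))$), so that it suffices to bound each of the three pieces in $L^2$.

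For the first piece, since we take $P(X)\equiv X$, which is Lipschitz continuous and differentiable, and since both $\widehat{X}$ and $\widetilde{X}$ are computed in (effectively infinite) high precision, I would quote directly the bound $\lVert \widehat{X}_l - \widehat{X}_{l-1} - \widetilde{X}_l + \widetilde{X}_{l-1}\rVert_p \le O(\delta^{1/2}\lVert Z - \widetilde{Z}\rVert_{p'})$ from \citeauthor{giles2020approximate} \citep[lemmas~4.10 and 4.11]{giles2020approximate}, with $p = 2$ and $p' = 2 + \epsilon$ for some $\epsilon \in (0,\infty)$. Squaring yields the $O(\delta\, \mathbb{E}(\lvert Z - \widetilde{Z}\rvert^{2+\epsilon}))$ term, up to the precise relation between the $L^{p'}$ norm and the raw $(2+\epsilon)$-th moment, which the $\approx$ in the statement absorbs.

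For the second and third pieces, the key observation is that $\widetilde{X}$ and $\overline{X}$ use the \emph{same} approximate random variables $\widetilde{Z}$, so the approximation error $Z-\widetilde{Z}$ cancels and only the rounding errors $\eta$ and $\eta'$ of model~\ref{model:rounding_errors} survive. Setting $\mathcal{E}_n \coloneqq \widetilde{X}_n - \overline{X}_n$ and differencing the two Euler-Maruyama recursions reproduces exactly the situation of lemma~\ref{lemma:rounding_error_two_way}: $\mathcal{A}_n$ and $\mathcal{B}_n$ are controlled by the spatial Lipschitz continuity of $a$ and $b$, while $\Xi_n = -\eta_n$ is a martingale increment with $\mathbb{E}(\lvert\Xi_n\rvert^p)\le C\varrho^p$ and $\Theta_n = -\eta'_n$ has $\mathbb{E}(\lvert\Theta_n\rvert^p)\le C\varrho^p\delta^{p/2}$, i.e. $s = -\tfrac12$ in lemma~\ref{lemma:strong_error_bound}. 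Hence $\mathbb{E}(\lvert\widetilde{X}_l - \overline{X}_l\rvert^2)\le O(\varrho^2\delta^{-1})$, and the same bound holds on the coarse level since its step count $\delta_{l-1}^{-1} = \tfrac12\delta^{-1}$ is still $O(\delta^{-1})$. Adding the three contributions gives the claim.

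I do not expect any single step to be hard; the main obstacle is bookkeeping. One must check that the chosen decomposition genuinely isolates an ``approximation error only'' piece and two ``rounding error only'' pieces, so that no residual $Z-\widetilde{Z}$ leaks into the latter and no $\eta,\eta'$ leaks into the former, and one must match the $L^{p'}$-norm form of the \citet{giles2020approximate} bound to the moment form quoted in the statement. It is also worth stressing that no cancellation between the fine- and coarse-path rounding errors is asserted: they are bounded separately by Minkowski, so the $O(\delta^{-1}\varrho^2)$ term is genuinely the accumulated martingale contribution of $\delta^{-1}$ increments of size $\varrho$, not something smaller.
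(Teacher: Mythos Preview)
Your proposal is correct and matches the paper's approach. The paper's proof is a two-sentence sketch that simply attributes the $O(\delta\,\mathbb{E}(\lvert Z-\widetilde{Z}\rvert^{2+\epsilon}))$ term to the \citet{giles2020approximate} four-way bound (with $P(X)\equiv X$, $p=2$) and the $O(\delta^{-1}\varrho^2)$ term to the $\eta,\eta'$ contributions handled via lemma~\ref{lemma:strong_error_bound} ``in an identical manner to the proof of lemma~\ref{lemma:rounding_error_two_way}''; your explicit insertion of the high-precision approximate paths $\widetilde{X}_l,\widetilde{X}_{l-1}$ and the Minkowski split is precisely the natural way to make those two citations fit together, and your bookkeeping remarks about keeping the $Z-\widetilde{Z}$ and $\eta,\eta'$ contributions segregated are exactly what the paper leaves implicit.
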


\begin{proof}
The initial $ O(\delta\, \mathbb{E}(\lvert Z - \widetilde{Z} \rvert^{2 + \epsilon})) $ term comes from the result by \citet{giles2020approximate,giles2020approximating} taking $ P(x) \equiv X $ and $ p = 2 $. The second  $ O(\delta^{-1}\varrho^2)  $ term comes from the $ \eta $ and $ \eta' $ contributions from model~\ref{model:rounding_errors}, and their final nett contributions arise using lemma~\ref{lemma:strong_error_bound} in an identical manner to the proof of lemma~\ref{lemma:rounding_error_two_way}. \qedhere
\end{proof}

We can compare the realised variances  predicted by  \citet{giles2020approximate,giles2020approximating} when using approximate random variables in infinite precision, with those predicted by lemma~\ref{lemma:rounding_error_four_way}. We use the piecewise linear approximation by \citet{giles2020approximating}, and use half precision capable hardware (rather than emulation using the mpmath Python library). We use C code on a Nvidia Jetson AGX Xavier machine, containing a Nvidia 12 core Volta GPU and an 8 core Arm~v8.2 64 bit CPU. Both the CPU and the GPU support half precision floating point arithmetic in hardware, and so we run the code on the CPU using the \texttt{\_Float16} data type for half precision,  compiled with \texttt{gcc} and notably with the flags \texttt{-O0} and \texttt{-march=armv8.2-a+fp16}. The first flag ensures no compiler optimisations are issued, guaranteeing the Kahan compensated summation is not removed by the compiler, and the second ensures half precision data types and operations are accessible and used. The results for the variances of the various multilevel Monte Carlo terms are shown in figure~\ref{fig:four_way_variance}.

\begin{figure}[htb]
\centering

\hfil
\subfigure{\centering \begin{minipage}[c]{0.49\linewidth}
\centering
\includegraphics{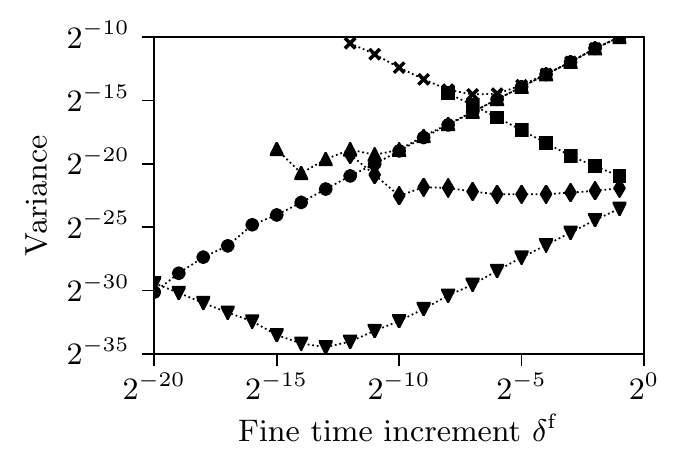}
\end{minipage}}\hfil
\subfigure{\centering
\begin{minipage}[c]{0.49\linewidth}
\centering
\renewcommand{\arraystretch}{1.4}  
\small
\begin{tabular}{cllc}
& Difference& Precision & Kahan \\ \hline 
\raisebox{-0.1em}{\huge$ \bullet$} & $ \widehat{X}_l - \widehat{X}_{l-1} $ & Double & No\\
$ \bm{\times} $ & $ \overline{X}_l - \overline{X}_{l-1} $ & Half & No\\
{\large $ \blacktriangle $} & $ \overline{X}_l - \overline{X}_{l-1} $ & Half & Yes\\
{\large $ \blacktriangledown $} & $ \widehat{X}_l - \widehat{X}_{l-1} - \overline{X}_l + \overline{X}_{l-1} $& Single & No \\
$ \blacksquare $ &$ \widehat{X}_l - \widehat{X}_{l-1} - \overline{X}_l + \overline{X}_{l-1} $ & Half & No\\
$ \blacklozenge $ & $ \widehat{X}_l - \widehat{X}_{l-1} - \overline{X}_l + \overline{X}_{l-1} $ & Half & Yes\\[3em]
\end{tabular}
\end{minipage}
}\hfil

\caption{The variance reduction when switching to approximate random variables with a nested multilevel Monte Carlo framework, showing the variances of various two and four way differences. The key indicates the difference, precision, and whether Kahan compensated summation is used.}
\label{fig:four_way_variance}
\end{figure}

Inspecting figure~\ref{fig:four_way_variance} we can make several observations. The first is that the usual two way difference (computed in double precision) exhibits the usual $ O(\delta) $ variance decay rate, as is to be expected and is a standard result \citep{kloeden1999numerical,glasserman2013monte}. 

The next item of interest is the behaviour of the four way difference computed in single precision. Down to discretisations as fine as $ \delta \approx 2^{-13} $ we can see the variance decays at the same $ O(\delta) $ rate, as already predicted and demonstrated by \citet[4.1]{giles2020approximating}. However, the novel feature predicted by lemma~\ref{lemma:rounding_error_four_way} is the emergence of the $ O(\delta^{-1}) $ rate  for very fine discretisation, arising from model~\ref{model:rounding_errors}. We can see that the onset of rounding error is not immediately catastrophic, and that for $ \delta \approx 2^{-17} $ there is still a reduction in the variance between the two and four way differences which is still approximately $ 2^{-6} $. However, eventually, for discretisations as fine as $ \delta \approx 2^{-20} $ there is no reduction in variance. Ultimately, this demonstrates that double precision is typically superfluous for Monte Carlo path simulations (ignoring sensitivity calculations for computing derivatives by finite differences), and single precision is sufficient. 

The main items of particular interest are the half precision variances. For simplicity we begin by inspecting the difference without Kahan compensated summation. The variance of the two way difference term decreases in line with the double precision two way difference down to approximately $ \delta \approx 2^{-7} $, and thereafter the effects of rounding error become dominant. This immediately places a lower limit on an uncompensated half precision framework, which is valid for discretisations coarser than $ \delta \geq 2^{-7} $. Considering the four way difference, we see that at the very coarsest level there is an appreciable variance reduction by a factor of approximately $ 2^{-12} $, although rounding error has already started to become dominant. We see that as the discretisations become finer the rounding error increases, and at $ \delta \approx 2^{-7} $ coincides with the two way difference. However, for discretisations coarser than $ \delta \geq 2^{-4} $, there is still at least a variance drop by approximately a factor of $ 2^{-6} $. This suggests that while half precision calculations may be fast, if the rounding error is not compensated for, then they are only useful on the coarsest few levels. 

If we incorporate a Kahan compensated summation to the Euler-Maruyama scheme when using half precision approximate random variables, the picture improves. The first item to note is the variance of the two way difference, which mirrors the double precision's two way variance down to approximately $ \delta \approx  2^{-11} $, placing a lower limit on the minimum possible discretisation with $ \delta \geq 2^{-11} $. For the four way difference, at the very coarsest level we see approximately the same variance as the uncompensated half precision four way difference, as might be expected. However, with the compensation, the error is an $ O(1) $ constant as the discretisation becomes ever fine down to $ \delta \approx 2^{-10} $. For discretisations $ \delta <  2^{-10} $ a higher order error process appears to dominate, and for such fine discretisations the two way and four way variances coincide. This suggests that half precision simulations using Kahan compensated summation are applicable for much finer discretisation than equivalent simulations without the Kahan compensated summation. A variance reduction of approximately $ 2^{-6} $ is achieved for discretisations $ \delta \geq 2^{-8} $. Thus Kahan compensated summation approximately doubles the scope of practical applicability for half precision simulations. 

\begin{table}[htb]
\centering
\caption{Performance of various approximations and implementations of the inverse Gaussian cumulative distribution function, and the possible speed ups offered.}
\label{tab:implementations}

\hfil 
\subfigure[The time to generate exact and approximate Gaussian random variables.\label{tab:implementation_times}]{
\begin{tabular}{llll}
Description & Precision &  Clock cycles& Source\\ 
\hline
Intel (HA) & Single &  $ 3.5 \pm 0.2 $ & \citep{giles2020approximating}\\
Piecewise linear& Single &   $ 0.5  \pm 0.1 $ & \citep{giles2020approximating} \\
Piecewise linear& Half &   $ 0.25 $ & Speculated
\end{tabular}}\hfil \hfil 
\subfigure[The maximum possible speed ups.\label{tab:implementation_speedups}]{
\begin{tabular}{ccc}
Precision & Kahan & Speed up \\
\hline
Single & No & 7 \\
Half & No & 14 \\
Half & Yes & 10
\end{tabular}
}\hfil 
\end{table}

We take the timing results on Intel AVX-512 Skylake hardware from \citet{giles2020approximating}. For the exact Gaussian distribution, we take as our baseline the single precision Intel high accuracy (HA) function. In lieu of vectorised half precision capable hardware, we speculate that for the approximate random variables, that half precision input can be processed in half the time as single precision input. Overall then we have the times shown in table~\ref{tab:implementation_times}. Furthermore, we make the idealised assumption that the cost of the simulations is entirely based on the cost of generating random numbers, and neglect the cost of all other arithmetic operations. These give the maximum potential speed ups shown in table~\ref{tab:implementation_speedups}. For the half precision approximation using Kahan compensated summation, we again speculate and suggest an intermediate value between the single and half precision offerings. 

For each possible discretisation level, the estimated speed ups predicted for each individual level from the nested multilevel Monte Carlo analysis are shown in figure~\ref{fig:four_way_savings}. We can see from this that for the bulk of levels the single precision approximation offers a good potential speed up by a factor of approximately 7. However, for the very coarsest few levels, the half precision approximations without Kahan compensated summation offer superior speed ups by a factor of 10--12. Dependent on the speed reduction that comes from incorporating the half precision Kahan compensated summation, there is the possibility of these offering a third intermediate regime where they are they optimal choice. For our speculated speed ups from table~\ref{tab:implementation_speedups} we see there appears to be such an intermediate region, although it is not overwhelmingly competitive compared to the two other alternatives. 

\begin{figure}[htb]
\centering
\includegraphics{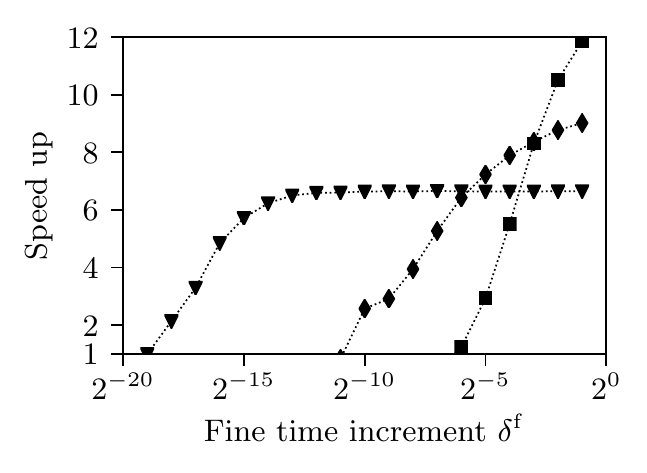}

\caption{The potential savings from a nested multilevel Monte Carlo framework using approximate random variables from a piecewise linear approximation for various discretisation levels. ({\large $ \blacktriangledown $}) Single precision. ($ \blacksquare $) Half precision without Kahan compensated summation. ($ \blacklozenge $) Half precision with Kahan compensated summation.}
\label{fig:four_way_savings}
\end{figure}

It is worth remarking that when using the Euler-Maruyama scheme, as the scheme has a strong convergence order of $ \tfrac{1}{2} $, the computational work load is approximately spread evenly over the various levels \citep{giles2008multilevel}. However, for numerical schemes with higher strong convergence orders, such as the Milstein scheme which has order 1 strong convergence \citep{kloeden1999numerical,glasserman2013monte}, the work load is predominantly concentrated on the coarsest levels \citep{giles2008multilevel}. The implication of this is that the potential half precision speed ups offered on the coarsest levels, even without Kahan compensated summation, may well dominate the multilevel savings. Hence, while half precision appears attractive even with multilevel Monte Carlo frameworks using the Euler-Maruyama scheme, this becomes even more so for the Milstein scheme and other higher order methods. 

Lastly, we can speculate about the utility of brain floats compared to regular half precision floats (taken to be the IEEE specification). Brain floats and half precision floats are both 16 bits in size, but differ in their trade off between precision and range. Brain floats have a much larger range and lower precision than regular half precision. Having a lower precision will likely mean that the initial impact of rounding error will be even more severe than for half precision. This means the variance reduction will be less favourable, and the nested multilevel Monte Carlo framework less efficient on each level, and useful over even fewer discretisations. Without cause to suspect brain floats will be any faster than regular half precision, this would suggest that while half precision is attractive for multilevel Monte Carlo applications, and brain floats may similarly be attractive for the same reasons, there is no reason to speculate that brain floats will be competitive over regular half precision floats. 

\section{Conclusions}
\label{sec:conclusions}

Performing calculations in high precisions may assuage worries about rounding errors, but makes several computations needlessly expensive. Considering the numerical simulation of stochastic differential equations, based on previous work using computationally cheap approximate random variables, we couple their incorporation into the Euler-Maruyama scheme with low precision implementations. We introduce a new model for the nett rounding error incurred which allows for systematic and unsystematic errors, analysing how the two can be balanced under appropriate assumptions. Kahan compensated summation is also discussed as a means of removing the leading order rounding error. This rounding error model is incorporated into a nested multilevel Monte Carlo scheme allowing for the speed of low precisions to be capitalised on without losing accuracy, finding that single precision is applicable for most discretisations and offers good potential speed ups, while half precision appears to offer superior speeds at only the coarsest few discretisation levels. 

Introducing finite precision calculations, we discuss the appeal of working in ever lower precisions, and the attraction of using half precision in various applications. Low precision offers improved speed by increasing bandwidth and decreasing floating point calculation times, and is rapidly gaining traction in software and hardware, primarily due to applications in machine learning. However, low precision comes with appreciable finite precision rounding error, whose effects are felt in applications including: linear algebra, machine learning, stochastic simulation, and various others. To mitigate against this there are high precision libraries, compensated summation schemes, and mathematical techniques such as Richardson extrapolation, although the problem of rounding error can be severe for 16 bit floating point formats such as half precision or brain floats.

Considering the setting of stochastic simulations and approximating solutions of stochastic differential equations using the Euler-Maruyama scheme we introduce model~\ref{model:rounding_errors} as a novel description for the effects of rounding error. Similar previous models for rounding error in this setting are notably from \citet{arciniega2003rounding} and \citet{omland2016mixed}, for the average and worst case scenarios respectively. The model we introduce is based on the framework by \citet{omland2016mixed} and recovers a similar model to the one by \citet{arciniega2003rounding}. However, comparing our model to that from \citet{arciniega2003rounding}, ours has several benefits. Firstly, ours facilitates the incorporation of approximate random variables, which \citet{giles2020approximating} showed offers considerable potential speed improvements. Our model rigorously justifies a leading order zero mean contribution, and permits a smaller second order and possibly non zero mean contribution, thus facilitating systematic and non systematic errors. Using lemma~\ref{lemma:strong_error_bound} we show the nett contributions from these two terms grow at the same rate for the Euler-Maruyama scheme, providing novel insight into the size permissible for systematic errors. 

Reviewing Kahan compensated summation, we discuss the leading order constant error this can be expected to produce. We numerically simulated geometric Brownian motion processes, finding the errors predicted by lemma~\ref{lemma:rounding_error_two_way} are observed empirically, alongside the anticipated error resulting from incorporating Kahan compensated summation. Furthermore, while Kahan compensated summation cancels the leading order rounding error, there is a balance between this and the error resulting from using approximate random variables, which becomes the dominant source of error in higher precision simulations. 

Low precision implementations of approximate random variables are best utilised with a nested multilevel Monte Carlo framework, paralleling the setup by \citet{giles2020approximating}. This has the capabilities of realising the potential speed ups offered from low precision calculations without losing any accuracy. The nested multilevel Monte Carlo setup produces a four way difference, whose variance we predict in lemma~\ref{lemma:rounding_error_four_way} and empirically observe. As expected, we find the errors resulting from introducing approximate random variables and also from low precision calculations are orthogonal effects. Using the nested multilevel Monte Carlo framework, we calculate that for a very wide range of possible discretisation levels that single precision simulations offer a speed up by a factor of approximately 7, as already shown by \citet{giles2020approximating}. However, for the very coarsest few levels, we demonstrated that half precision calculations, despite incurring significant rounding error from the very onset, can be successfully utilised within a multilevel Monte Carlo scheme. On these coarsest few levels, half precision may offer speed improvements by a factor of 10--12. For the Euler-Maruyama scheme the work load is usually evenly spread across the various levels, whereas for the higher order Milstein scheme the work load is dominated by the coarsest few levels. Thus we have been able to demonstrate the utility and applicability of half precision approximate random variables for stochastic simulation applications. Lastly, dependent on the cost model for the added arithmetic required for Kahan compensated summation, we were also able to demonstrate that this may provide a further intermediate region, extending the range of half precision to even finer discretisation levels, compounding their benefits. 

All of the code to produce these figures is freely available and hosted by \citet{sheridan2020low_precision}. 

\section{Acknowledgements}
\label{sec:acknowledgements}

We would like to acknowledge and thank those who have financially sponsored this work. This includes the Engineering and Physical Sciences Research Council (EPSRC) and Oxford University's centre for doctoral training in Industrially Focused Mathematical Modelling (InFoMM), with the EP/L015803/1 funding grant. Furthermore, this research stems from a PhD project \citep{sheridan2020nested} which was funded by Arm and NAG. Additionally, funding was also provided by the Inference, Computation and Numerics for Insights into Cities (ICONIC) project, and the programme grant EP/P020720/1. Lastly, Mansfield College Oxford also contributed funds.

\bibliography{../references}

\end{document}